\numberwithin{equation}{section}
\numberwithin{equation}{subsection}
\theoremstyle{plain}
\newtheorem{theorem}[equation]{Theorem}
\newtheorem{lemma}[equation]{Lemma}
\newtheorem{prop}[equation]{Proposition}
\newtheorem{clm}[equation]{Claim}
\theoremstyle{definition}
\newtheorem{example}[equation]{Example}
\newtheorem{remark}[equation]{Remark}
\newtheorem{defn}[equation]{Definition}
\newtheorem{rem}[equation]{Remark}
\numberwithin{equation}{section}
\numberwithin{equation}{subsection}
\def\C{\mathbb C}
\def\Q{\mathbb Q}
\def\R{\mathbb R}
\def\Z{\mathbb Z}
\newcommand{\frG}{{\frak G}}
\newcommand{\calf}{{\mathcal F}}
\newcommand{\calO}{{\mathcal O}}
\newcommand{\calL}{\mathcal{L}}
\newcommand{\labelpar}{\label}
\newcommand{\frsw}{\mathfrak{sw}}
\title{Seiberg--Witten invariant
of the universal abelian cover of $S^3_{-p/q}(K)$}
\author{J\'{o}zsef Bodn\'{a}r}
\address{A. R\'enyi Institute of Mathematics, 1053 Budapest,
Re\'altanoda u. 13-15,  Hungary.}
\email{bodnar.jozef@renyi.mta.hu}
\author{Andr\'as N\'emethi}
\address{A. R\'enyi Institute of Mathematics, 1053 Budapest, Re\'altanoda u. 13-15, Hungary.}
\email{nemethi.andras@renyi.mta.hu}
\thanks{The first author is supported by
 the `Lend\"ulet' and ERC program `LTDBud' at MTA Alfr\'ed R\'enyi Institute of Mathematics.
The second  author is partially supported by OTKA Grants 100796 and K112735. }
\keywords{3--manifolds, Seiberg--Witten invariants, plumbed 3--manifolds,  $\Q$--homology spheres,
surgery 3--manifolds, abelian coverings, normal surface singularities,  superisolated singularities,
links of singularities,  geometric genus,
lattice cohomology}
\date{}
\begin{document}

\maketitle


\pagestyle{myheadings} \markboth{{\normalsize
J. Bodn\'ar and A. N\'emethi}}{ {\normalsize Seiberg--Witten invariant
of the universal abelian cover of $S^3_{-p/q}(K)$}}

\begin{abstract}
We prove an additivity property for the normalized Seiberg--Witten invariants with respect to
the universal abelian cover of those 3--manifolds, which are obtained via
 negative rational Dehn surgeries along connected sum of algebraic knots. Although the statement is purely topological, we use the theory of complex singularities in several steps of the proof.
This topological covering additivity property can be compared with certain analytic
properties of  normal surface singularities,  especially with functorial behaviour
 of the (equivariant) geometric genus of singularities. We present several examples
  in order to find  the validity limits of the proved property, one of them shows
  that the covering additivity property is not true for negative definite plumbed
  3--manifolds in general.
\end{abstract}

\section{Introduction}\labelpar{s:intro}

\subsection{Motivation}\labelpar{ss:motiv}

In this paper we prove an additivity property of the 3--dimensional (normalized)
Seiberg--Witten invariant
with respect to an abelian cover, valid  for surgery 3--manifolds. Namely,
assume that $M$ is obtained
as a negative rational surgery along connected sum of algebraic knots in the three-sphere $S^3$. Let
$\Sigma$ be its universal abelian cover. Theorem \ref{thm:main} states that the sum
over all spin$^c$ structures of the
Seiberg--Witten invariants of $M$ (after
normalisation) equals to the canonical Seiberg--Witten invariant of $\Sigma$.

Both covers of manifolds, and manifolds of form $S^3_{-p/q}(K)$, are extensively studied in recent articles. The stability of certain properties and invariants with respect to the
coverings is a key classical strategy in topology, it is even more motivated
by the recent proof of Thurston's virtually fibered conjecture \cite{Agol,Wise}.
Manifolds of form $S^3_{-p/q}(K)$ can be particularly interesting due to theorem of Lickorish and Wallace \cite{Lic,Wal} stating that every closed oriented three-manifold can be expressed as surgery on a \emph{link} in $S^3$. Based on this result, one can ask which manifolds have surgery representations with some restrictions. For example,
 using Heegaard--Floer homology, \cite{HKL} provides
 necessary conditions on manifolds having surgery representation along a \emph{knot}. In this context, Theorem \ref{thm:main} can be viewed also as a criterion for a manifold having surgery representation of form $S^3_{-p/q}(K)$ with $K$
 a connected sum of algebraic knots.

In fact, Seiberg--Witten invariants (SW) and Heegaard--Floer homologies are closely related. The SW invariants were originally introduced by Witten in \cite{Witten}, but they also arise as Euler characteristics of Heegaard--Floer homologies, cf. \cite{OSz1,OSz2}. In this article we will involve another cohomology theory with similar property. Since $S^3_{-p/q}(K)$ is
 representable by a negative definite plumbing graph, via \cite{NemSW} we view the SW invariants
as Euler characteristics of lattice cohomologies introduced in \cite{Nlatnorm}. The big
advantage of the lattice cohomology over the classical definition of Heegaard--Floer homology
is that it is computable algorithmically from the plumbing graph. In the last section
of applications and examples the above `covering additivity property' will be combined  with
results involving lattice cohomology.

Another strong motivation to study the above property is provided by the
theory of  complex normal surface singularities: the geometric genus of the analytic germ
is conjecturally connected with the SW invariant of the link of the germ.
Since the geometric genus  satisfies the `covering
additivity property' (cf. \S \ref{ss:sing}),
it is natural to ask for the validity of similar property at purely topological level.
Furthermore, from the point of view of singularity theory, the motivation for the surgery manifolds
 $S^3_{-p/q}(K)$ is also strong:
the link of the so called \emph{superisolated singularities}
(introduced in \cite{Lue}) are of this form.
 These  singularities are the key test-examples and  provide  counterexamples
for several conjectures. They embed the theory of projective plane curves to the theory
of surface singularities. For their brief introduction see Example \ref{ex:SIS},
for a detailed presentation see \cite{Lue,LMN}.

All these connections with the analytic theory will be used deeply in several points of the proof. For consequences of the main result regarding analytic invariants see the last sections.


\subsection{Notations.}\labelpar{ss:not} \
We recall some facts about negative definite plumbed 3-- and 4--manifolds, their spin$^c$ structures and Seiberg--Witten invariants.
 For more see  \cite{NemnicI,NemnicIII}.

Let $M$ be a $3$--manifold which is a rational homology sphere ($\mathbb{Q}HS^3$). Assume
that it has a negative definite plumbing representation with a decorated connected  graph $G$ with vertex set $\mathcal{V}$.
 In particular, $M$ is the boundary of a plumbed $4$--manifold $P$,
which is obtained by plumbing disc bundles over oriented surfaces $E_v\simeq S^2$, $v \in \mathcal{V}$
(according to  $G$), and which
has a negative definite intersection form.
A vertex  $v \in \mathcal{V}= \mathcal{V}(G)$
 is  decorated by  the \emph{self--intersection} $e_v \in \mathbb{Z}$ (of $E_v$ in $P$). In other words, $e_v$ is the euler number of the disc bundle over $E_v \cong S^2$.
 Since $M$ is a $\mathbb{Q}HS^3$, the graph   $G$ is a tree.
 We set $\#\mathcal{V}(G)$ for the number of vertices of $G$.

Below all the (co)homologies are considered with ${\mathbb Z}$--coefficients.

Denote by $L = L_G= \mathbb{Z}\langle E_v \rangle_{v\in \mathcal{V}}$ the free abelian group
generated by basis elements $E_v$, indexed by ${\mathcal V}$.
It can be identified with $H_2(P)$,  where $E_v$ represent the zero sections of the disc--bundles.
It carries the negative definite intersection form $(.,.) = (.,.)_G$ (of $P$; readable from $G$ too).
This form naturally extends to $L \otimes \mathbb{Q}$.
Denoting by $L' = L_G' = \textrm{Hom}_{\mathbb Z}(L, \mathbb{Z})$ the dual lattice,
one gets a natural embedding $L \rightarrow L'$ by $l \mapsto (\cdot,l)$.
Furthermore, we can regard $L'$ as a subgroup of $L \otimes \mathbb{Q}$, therefore $(\cdot,\cdot)$
extends to $L'$ as well.
We introduce the anti-dual basis elements  $E_v^{\ast}$ in $L'$ defined by
 $(E_{v'}, E_v^{\ast})$ being $-1$ if $v = v'$ and $0$ otherwise.
Notice that $L' \cong H^2(P) \cong H_2(P, M)$.  The short exact sequence
$0\to H_2(P)\to H_2(P,M)\to H_1(M)\to 0$ identify $L'/L $ with $H_1(M)$, which will
be denoted by $H$.
We denote the \emph{class} of $l' \in L'$ by $[l'] \in H$, and we call
$l' \in L'$  a \emph{representative} of $[l']$.


Assume that the  intersection form in the basis $\{E_v\}_v$ has matrix $I$; then
 we define  $\det(G):=\det(-I)$. It also equals the order of $H$.
 (Since $I$ is negative definite,
 $\det(G)>0$.)

For any $h \in H$, we denote by $r_h= \sum_{v \in \mathcal{V}} c_v E_v$ `the smallest
effective representative' of its class in $L'$, determined by the property
$0 \leq c_v < 1$ for all $v$.

Finally, we define the \emph{canonical characteristic element} in $L'$. It is the unique element $k_G \in L'$
such that $(k_G, E_v) = -(E_v,E_v)-2$ for every $v \in \mathcal{V}$. (In fact, $P$ carries the structure of a smooth complex surface --- in the case
of singularities, $P$ is a resolution, cf. Section 2 ---,  and
$k_G$ is the first Chern class of its complex cotangent bundle.)

The Seiberg--Witten invariants of $M$ associate a rational number
 to each spin$^c$ structure on $M$. There is a `canonical' spin$^c$ structure $\sigma_{\textrm{can}} \in {\rm Spin}^c(M)$, the restriction of that spin$^c$ structure of
 $P$, which has first Chern class $k_G\in H^2(P)$.
  As we assumed $M$ to be a $\mathbb{Q}HS^3$, ${\rm Spin}^c(M)$ is finite. It is an
  $H$ torsor: for $h \in H$, we denote this action by $\sigma \mapsto h \ast \sigma$.

We denote by $\frsw_{\sigma}(M) \in \mathbb{Z}$ the \emph{Seiberg--Witten invariant} of $M$ corresponding to the spin$^c$ structure $\sigma$. This is the classical monopole counting Seiberg--Witten invariant of $M$ corrected by the Kreck--Stolz invariant to make it dependent only on the manifold $M$.

Now we are ready to define the following invariant for each homology element $h \in H$:
\begin{equation}\label{eq:Intr1}
 \mathfrak{i}_h(M) := \frac{(k_G+2r_h, k_G+2r_h) + \#\mathcal{V}}{8}. \end{equation}
In fact, it does not depend on the particular plumbing representation of the manifold $M$ (or $P$); it is an invariant of the manifold $M$. Next we  define the following normalization
 of the Seiberg--Witten invariant: for any $h \in H $, we set
\begin{equation}\label{eq:Inrt2}
 \mathfrak{s}_h(M) = \frsw_{h \ast \sigma_{\textrm{can}}}(M) - \mathfrak{i}_h(M). \end{equation}
Sometimes we will also use the notations
$\mathfrak{s}_h(G) = \mathfrak{s}_h(M)$, or
$\frsw_h(G) = \frsw_{h \ast \sigma_\textrm{can}}(M)$.

In fact, $\mathfrak{s}_h(M) \in \mathbb{Z}$. This can be seen easily through the identity \eqref{eq:LCoh}. We also refer to
\S \ref{ss:latticedef} for the fact that $\mathfrak{s}_h(M)$ (and thus $\mathfrak{i}_h(M)$ as well) is indeed independent of the plumbing representation of the manifold.

Let $\Sigma$ be the universal abelian cover (UAC) of the manifold $M$: it is associated with the
abelianisation $\pi_1(M)\to H_1(M)$.  In the next definition, $0$
is the unit element in $ H_1(\Sigma)$.

\begin{defn}\label{def:s}
 We say that for a manifold $M$ \emph{the `covering additivity property' of the invariant} $\mathfrak{s}$ \emph{holds with respect to the universal abelian cover} (shortly,
 `CAP of $\mathfrak{s}$ holds') if
 \[ \mathfrak{s}_{0}(\Sigma) = \sum_{h \in H_1(M)} \mathfrak{s}_{h}(M). \]
\end{defn}
Our main result is the following.

\begin{theorem}\label{thm:main0}
 Let $M= S^3_{-p/q}(K)$ be a manifold obtained by a negative  rational Dehn
 surgery of $S^3$ along a connected sum of algebraic knots $K = K_1 \# \dots \# K_{\nu}$
 ($p,q>0$, ${\rm gcd}(p,q)=1$).
 Assume that $\Sigma$, the UAC  of $M$,  is a $\mathbb{Q}HS^3$. Then CAP of $\mathfrak{s}$ holds.
\end{theorem}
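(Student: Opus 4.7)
The plan is to translate the purely topological statement into an analytic statement about complex normal surface singularities, exploit the covering additivity of the (equivariant) geometric genus, and translate back via the Seiberg--Witten invariant conjecture of N\'emethi--Nicolaescu. This matches the announcement that the proof, although of a topological statement, will genuinely use singularity theory.

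First I would realize $M = S^3_{-p/q}(K_1 \# \cdots \# K_{\nu})$ as the link of a normal surface singularity $(X,o)$. Since each $K_i$ is the link of an irreducible plane curve singularity, the connected sum $K$ is realized by a plane curve germ with $\nu$ local branches, and a suitable variant of the \emph{superisolated singularity} construction of Luengo (together with its rational surgery generalisations, see \cite{Lue,LMN}) produces a hypersurface (or more generally a normal) surface germ $(X,o)$ whose link is precisely $M$. Simultaneously, the manifold $\Sigma$ is realized as the link of the analytic universal abelian cover $(X^{ab},o)$ of $(X,o)$; this is a normal surface germ because of the $\mathbb{Q}HS^3$ hypothesis on $\Sigma$, and the covering $X^{ab}\to X$ is Galois with group $H = H_1(M)$.

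Next I would invoke the Seiberg--Witten invariant conjecture, which for an appropriate class of singularities (including SIS, their rational generalisations, and splice--quotients) asserts
\[
\mathfrak{s}_{h}(M) \;=\; p_{g}(X,h) \qquad \text{for all } h\in H,
\]
where $p_g(X,h)$ denotes the $h$-equivariant geometric genus, and similarly $\mathfrak{s}_0(\Sigma) = p_g(X^{ab})$. The third step is the classical analytic covering additivity
\[
p_{g}(X^{ab}) \;=\; \sum_{h\in H} p_{g}(X,h),
\]
which comes from the eigenspace decomposition of $\pi_{\ast}\mathcal{O}_{X^{ab}}$ (or rather the pushforward from a common equivariant resolution) under the Galois action of $H$. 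Chaining the three displayed equalities gives exactly
\[
\mathfrak{s}_{0}(\Sigma) \;=\; p_{g}(X^{ab}) \;=\; \sum_{h\in H} p_{g}(X,h) \;=\; \sum_{h\in H} \mathfrak{s}_{h}(M),
\]
which is the covering additivity property.

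The main obstacle I foresee is keeping control of the Seiberg--Witten invariant conjecture on $M$ and $\Sigma$ simultaneously: even when the conjecture is known for $(X,o)$ because it is an SIS--type germ, the analytic UAC $(X^{ab},o)$ is typically of a different nature, so establishing $\mathfrak{s}_0(\Sigma) = p_g(X^{ab})$ may require a separate argument going through lattice cohomology, using the identification $\mathfrak{s}_h(M) = \chi(\mathbb{H}^{\ast}(G,h))$ of \cite{NemSW,Nlatnorm} together with an explicit combinatorial construction of the plumbing graph of $\Sigma$ from that of $M$. A further subtlety, which also needs to be addressed, is checking that the particular analytic model chosen for $M$ is irrelevant: the equivariant geometric genera $p_g(X,h)$ depend a priori on the analytic structure, so one must work within a class for which $\sum_h p_g(X,h)$, and not only its total, coincides with the purely topological quantity $\sum_h \mathfrak{s}_h(M)$.
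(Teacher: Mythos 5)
Your overall strategy --- realize $M$ as the link of a (superisolated--type) singularity $(X,o)$, apply the equivariant SWIC to $(X,o)$ and the SWIC to its analytic universal abelian cover $(Y,o)$, and chain these with the analytic additivity $p_g(Y)=\sum_h p_g(X)_h$ --- is exactly the content of Claim \ref{cl:pgadd}, and it is precisely the route that fails for these manifolds. The paper emphasizes that superisolated singularities are the main source of \emph{counterexamples} to the SWIC, and Example \ref{ex:ineq} exhibits a concrete instance within the scope of the theorem ($M=S^3_{-4}(K_1)$ with $K_1$ the $(2,7)$ torus knot) in which the analytic UAC satisfies the strict inequality $p_g(Y)<\mathfrak{s}_0(\Sigma)$, and consequently $p_g(X)_h<\mathfrak{s}_h(M)$ for at least one $h$. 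So the middle links of your chain of equalities are false for the available analytic models, and no analytic structure is known --- or even conjecturally proposed --- for which they would hold; indeed no formula or prediction for $p_g(X)_h$ or $p_g(Y)$ exists for superisolated germs (Remark \ref{rem:END1}). This is not a technical point to be ``kept under control,'' as you suggest in your last paragraph: it is the reason the theorem cannot be proved this way. (For $q>1$ there is the additional, secondary issue of producing a superisolated--type model with link $S^3_{-p/q}(K)$ at all.)

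The paper's actual proof invokes singularity theory only where the relevant conjectures are theorems. Using the surgery (cut--and--paste) formula of \cite{NB}, both $\mathfrak{s}_h(G)$ and $\mathfrak{s}_0(\Gamma)$ are decomposed into a polynomial--part term attached to the central vertex plus the contributions of the subgraphs obtained by deleting it; see \eqref{eq:swsis2} and \eqref{eq:swuac2}. The desired identity then splits into three statements: the summed polynomial parts agree because the Alexander polynomials of the knots are preserved under the covering (Lemma \ref{lem:pol}, via \cite[Prop.~6.6]{NemnicIII} and Lemma \ref{cl:eq}); the blocks $\Gamma_j$ of the cover are links of \emph{suspension} singularities $\{f_j+z_j^p=0\}$, for which the SWIC is proved in \cite{NemnicIII}, and their contribution is evaluated by a new geometric genus formula (Claim \ref{cl:susppg}); and the remaining leg reduces to the lens--space case, where CAP is already known (Lemma \ref{lem:lens}, Example \ref{ex:lens}). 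If you wish to keep an analytic flavour in your argument, the SWIC must be applied to these constituent pieces, not to $M$ and $\Sigma$ themselves.
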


Though the statement is topological, in the proof we use several analytic steps
based on the theory of  singularities. These steps not only emphasize the
role of the algebraic knots and of the negative definite plumbing construction, but they also
provide the possibility to use certain deep results valid for singularities.

We emphasize that the above covering additivity property is not true for general
negative definite plumbed 3--manifolds (hence for general 3--manifolds either), cf.
Example \ref{rem:counter}.
In particular, we cannot expect a proof of the main theorem by a general topological machinery.

\section{Preliminaries}\labelpar{s:prelimi}

\subsection{Connection with singularity theory}\labelpar{ss:sing}

We present the connection of Theorem \ref{thm:main0}  with singularity theory,
namely, with the (equivariant) geometric genera of normal surface singularities and the Seiberg--Witten Invariant Conjecture of N\'emethi and Nicolaescu \cite{NemnicI}. For details
 we refer to \cite{Npq,Nsplice,NemSW,NemnicI,NemnicIII}.

Let $(X,0)$ be a complex normal surface singularity (germ) with link $M$.
Let $\pi:\widetilde{X}\to X$ be a good resolution
with negative definite dual resolution graph $G$, which can be regarded also as a plumbing graph for the 4-manifold $\widetilde{X}$ and its boundary $M$. (Hence, the $E_v$'s in this context are the irreducible exceptional curves.)
The \emph{geometric genus} of the singularity is defined as  $p_g(X) = \textrm{dim}_{\mathbb{C}} H^1(\widetilde{X}, \mathcal{O}_{\widetilde{X}})$, where $\mathcal{O}_{\widetilde{X}}$  is the
structure sheaf of $\widetilde{X}$.
It does not depend on the particular choice of the resolution.
In \cite{NemnicI} the following conjecture was formulated for certain singularities, as a topological characterization of $p_g(X)$:
\begin{equation}\label{eq:pg}
 p_g(X) = \mathfrak{s}_0(M).
\end{equation}
We say that the \emph{Seiberg--Witten Invariant Conjecture} (SWIC) holds for $X$ if
\eqref{eq:pg} is true.

It is natural to ask whether there is any similar connection involving the other Seiberg--Witten invariants? The answer is given in \cite{Npq,Nsplice}.
Let $(Y,0)$ be the universal abelian cover of the singularity $(X,0)$ (that is,
  its link  $\Sigma$  is the regular UAC of $M$, $(Y,0)$ is normal, and $(Y,0)\to (X,0)$ is
  analytic).
The covering  action of  $H = H_1(M)$ on $Y$ extends to the resolution $\widetilde{Y}$ of $Y$,
hence $H$ acts on  $H^1(\widetilde{Y}, \mathcal{O}_{\widetilde{Y}})$ as well,
  providing a eigenspace decomposition $\oplus _{\xi\in\widehat{H}}\,
  H^1(\widetilde{Y}, \mathcal{O}_{\widetilde{Y}})_{\xi}$, indexed
   by the characters $\xi\in\widehat{H}:= {\rm Hom}(H,{\mathbb C}^*)$ of $H$.
   Set
\[ p_g(X)_h = \textrm{dim}_{\mathbb{C}} H^1(\widetilde{Y}, \mathcal{O}_{\widetilde{Y}})_{\xi_h}, \]
where $\xi_{h} \in \widehat{H}$ is the character given by $h'' \mapsto e^{2\pi i (l', l'')}$,  $[l']=h$, $[l'']=h''$.  The numbers $p_g(X)_h$
 are called the \emph{equivariant geometric genera} of $(X,0)$.
 Note that $p_g(X)_0 = p_g(X)$.

We say that the \emph{Equivariant  Seiberg--Witten Invariant Conjecture} (EqSWIC) holds for
$(X,0)$ if the next identity \eqref{eq:pgh} is satisfied for \emph{every} $h \in H$:
\begin{equation}\label{eq:pgh}
 p_g(X)_h = \mathfrak{s}_h(M).
\end{equation}

\noindent
 Observe that by the definition, $p_g(Y) = \sum_{h \in H} p_g(X)_h$. Hence, the
 next claim is obvious.

 \begin{clm}\label{cl:pgadd}
  If for a singularity $(X,0)$ with $\mathbb{Q}HS^3$ link the EqSWIC holds, and for its
  (analytic) universal abelian cover $(Y,0)$ with $\mathbb{Q}HS^3$ link the SWIC holds, then for the link $M$ of $X$ the (purely topological) covering  additivity property of $\mathfrak{s}$ also holds.
 \end{clm}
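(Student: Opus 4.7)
The plan is to chain together the three ingredients already recorded just above the claim, so no new geometric or topological work will be required; the statement is essentially a bookkeeping observation and I expect the whole proof to fit on a few lines.

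First I would apply the SWIC hypothesis for the universal abelian cover $(Y,0)$. Since $\Sigma$ is the link of $(Y,0)$ and is assumed to be a $\mathbb{Q}HS^3$, identity \eqref{eq:pg} reads
\[ \mathfrak{s}_0(\Sigma) = p_g(Y). \]
Next I would decompose $p_g(Y)$ into its equivariant pieces. The $H$-action on $Y$ lifts to an $H$-action on the resolution $\widetilde Y$, hence to an $H$-representation on $H^1(\widetilde Y, \mathcal{O}_{\widetilde Y})$, and the character decomposition
\[ H^1(\widetilde Y, \mathcal{O}_{\widetilde Y}) = \bigoplus_{h \in H} H^1(\widetilde Y, \mathcal{O}_{\widetilde Y})_{\xi_h} \]
gives, directly from the definition of the equivariant geometric genera,
\[ p_g(Y) = \sum_{h \in H} p_g(X)_h. \]
This is the only step where one genuinely uses that $Y$ is the \emph{analytic} universal abelian cover of $X$, so that the covering group acts equivariantly on $\widetilde Y$ and $\mathcal{O}_{\widetilde Y}$.

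Finally I would apply the EqSWIC for $(X,0)$ term by term: identity \eqref{eq:pgh} rewrites each summand as $p_g(X)_h = \mathfrak{s}_h(M)$, and summing over $h$ yields
\[ \mathfrak{s}_0(\Sigma) = p_g(Y) = \sum_{h \in H} p_g(X)_h = \sum_{h \in H} \mathfrak{s}_h(M), \]
which is precisely the covering additivity property from Definition \ref{def:s}.

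There is no serious obstacle internal to the claim itself — the proof really does reduce to pasting together the three displayed identities. The actual difficulty lives one layer higher, namely in verifying (for concrete families of singularities) that both the SWIC and the EqSWIC hold; that is where the analytic input sits and where one must invoke the deeper results cited in the preceding subsections. The role of this claim is only to explain why, once those two conjectures are established for $(X,0)$ and $(Y,0)$, the topological CAP of $\mathfrak{s}$ for $M$ follows as a formal consequence, which in turn motivates why Theorem \ref{thm:main0} is a plausible purely topological statement.
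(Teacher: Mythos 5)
Your proposal is correct and matches the paper's argument exactly: the paper records the decomposition $p_g(Y)=\sum_{h\in H}p_g(X)_h$ immediately before the claim and then declares the claim ``obvious,'' which is precisely the three-identity chaining you spell out. No discrepancies.
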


\begin{example}\label{ex:lens}
 By \cite{Npq,NemnicII} the assumptions of Claim \ref{cl:pgadd} are satisfied {\it e.g.}
  by cyclic quotient
 and weighted homogeneous singularities,  hence the  CAP of $\mathfrak{s}$ holds
 for lens--spaces and Seifert rational homology sphere 3--manifolds.
Theorem \ref{thm:main0} proves CAP  for surgery manifolds, and Example \ref{rem:counter}
shows that CAP does {\it not} hold for arbitrary plumbed 3--manifolds.
\end{example}

It is convenient to extend the definitions (\ref{eq:Intr1}) and (\ref{eq:Inrt2}) for any
representative $l'\in L'$:
$$ \mathfrak{i}_{l'}(G):= \frac{(k_G+2l', k_G+2l')_G + \#\mathcal{V}(G)}{8} \ \ \ \mbox{and} \ \ \
 \mathfrak{s}_{l'}(M) = \frsw_{[l']}(G) - \mathfrak{i}_{l'}(G).$$
By a computation, for two representatives
$[l'_1] = [l'_2] = h \in H$
one has:
\begin{equation}\label{eq:difsh}\mathfrak{s}_{l_2'}(G)-\mathfrak{s}_{l_2'}(G)=
\chi(l'_2) - \chi(l'_1), \ \ \mbox{where} \ \ \
\chi(l') = -(l',l'+k_G)/2.\end{equation}
 In particular,
 \begin{equation}\label{eq:eulatgen}
 \mathfrak{s}_{l'}(G)  = \ \mathfrak{s}_{[l']}(G) + \chi(l')-\chi(r_{[l']}).
 \end{equation}
The invariants $\{\mathfrak{s}_h(G)\}_h$ for many 3--manifolds (graphs) are computed.
The next statement basically follows from Example \ref{ex:lens} combined
with the fact that the UAC of a lens space is $S^3$.
 \begin{prop}\label{prop:ratgraph}\cite{Npq,Nlatnorm,Nsplice}
 If $G$ is a (not necessarily minimal) graph of $S^3$ or of a lens--space then
 $\mathfrak{s}_h(G) = 0$ for every $h \in H$.
 \end{prop}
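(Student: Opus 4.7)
The plan is to reduce the claim to the corresponding vanishing for equivariant geometric genera, using the fact that the analytic universal abelian cover of a cyclic quotient singularity is smooth together with the (Eq)SWIC already quoted in Example \ref{ex:lens}. My first observation is that by \S\ref{ss:not} the quantities $\mathfrak{s}_h(G) = \mathfrak{s}_h(M)$ depend only on the underlying manifold $M$, not on the chosen plumbing $G$; hence it suffices to verify the claim for one convenient plumbing representation of $S^3$ or of a given lens space.

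For $S^3$ the homology group $H$ is trivial and only $\mathfrak{s}_0$ must be treated. I would realise $S^3$ as the link of the smooth germ $(\C^2,0)$, which has $p_g = 0$; since the SWIC holds in this case (cf.\ Example \ref{ex:lens}), one gets $\mathfrak{s}_0(S^3) = p_g(\C^2,0) = 0$.

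For a lens space $L$ I would pick a cyclic quotient singularity $(X,0)$ whose link is $L$, together with its analytic universal abelian cover $(Y,0)=(\C^2,0)$. Being smooth, $(Y,0)$ satisfies $p_g(Y)=0$, and the eigenspace decomposition
\[
 p_g(Y) \;=\; \sum_{h\in H} p_g(X)_h
\]
with non--negative summands forces every $p_g(X)_h$ to vanish. By the EqSWIC, valid for cyclic quotient singularities according to Example \ref{ex:lens}, I conclude that $\mathfrak{s}_h(L)=p_g(X)_h=0$ for all $h\in H$.

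The only ingredients that might need separate justification are the invariance of $\mathfrak{s}_h$ under the change of plumbing (this is the content of \cite{NemSW} and is implicit in \S\ref{ss:not}), and the existence of a cyclic quotient singularity realising any prescribed lens space as link, which is classical. The main conceptual obstacle — and the reason the argument is not purely elementary — is that we must invoke the analytic/topological bridge provided by the (Eq)SWIC; a fully topological alternative would proceed via lattice cohomology, using Artin's criterion to recognise $S^3$ and lens spaces as the only manifolds whose negative definite plumbing graphs are rational, and then noting that rational graphs have lattice cohomology concentrated in the trivial tower in degree $0$, which is equivalent to the vanishing of all $\mathfrak{s}_h$.
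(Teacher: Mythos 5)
Your proof is correct and follows essentially the same route as the paper, whose entire justification is the one-line remark that the proposition ``basically follows from Example \ref{ex:lens} combined with the fact that the UAC of a lens space is $S^3$'' (plus the citations); your explicit appeal to the non-negativity of the equivariant genera $p_g(X)_h$, summing to $p_g(\C^2,0)=0$, is precisely what upgrades the CAP identity $\sum_h\mathfrak{s}_h(L)=\mathfrak{s}_0(S^3)=0$ to the required termwise vanishing, a step the paper leaves implicit. One small correction to your closing aside: $S^3$ and lens spaces are by no means the \emph{only} manifolds whose negative definite plumbing graphs are rational (all quotient-singularity links qualify, for instance), but your sketched lattice-cohomology alternative only needs the true forward implication that graphs of $S^3$ and of lens spaces are rational.
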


\subsection{The structure of the plumbing graph $G$ of $S^3_{-p/q}(K)$}\labelpar{ss:graphs}

In this section we describe
 the  plumbing graph of $S^3_{-p/q}(K)$ 
 and we also fix some additional notations.

Let $K_j\subset S^3$ be the embedded knot of an irreducible plane curve singularity
$\{f_j(x,y) = 0\}\subset ({\mathbb C}^2,0)$, where $f_j$ is a local holomorphic germ
$ (\mathbb{C}^2,0)\rightarrow (\mathbb{C},0)$.
Let $G_j$ be the
{\it minimal embedded resolution graph} of $\{f_j(x,y) = 0\}\subset ({\mathbb C}^2,0)$,
which is a plumbing graph (of $S^3$) with several additional decorations: it has an
arrowhead supported on a vertex $u_j$, which represents $K_j$ (or, in a different language,
the strict transform $S(f_j)$ of
$\{f_j=0\}$ intersecting the exceptional $(-1)$--curve $E_{u_j}$). Furthermore, $G_j$ has a set of
multiplicity decorations, the vanishing orders $\{m_v\}_v$ of the pullback of $f_j$
along the irreducible exceptional divisors and $S(f_j)$. We collect them  in the total transform
${\rm div}(f_j)=S(f_j)+\sum_{v\in {\mathcal V}(G_j)}m_vE_v =S(f_j) + (f_j)$ of $f_j$,
characterized by
$({\rm div}(f_j),E_v)_{G_j}=0$ for any $v$, and
 $(f_j)$ 
 is its part supported on $\cup_{v\in\mathcal{V}(G_j)} E_v$. (For more on the graphs of plane curve singularities see
 \cite{BrKn,EN}.)

Next, we
write the surgery coefficient in \emph{Hirzebruch--Jung continued fraction}
\begin{equation}\label{eq:hj}
  p/q = k_0 - \cfrac{1}{k_1
          - \cfrac{1}{k_2
          - \cfrac{1}{\dots - \cfrac{1}{k_s} } } }\  =: [ k_0, k_1, \dots, k_s ],
\end{equation}
where $k_i\in {\mathbb Z}$,
$k_0\geq 1$, $k_1, \dots, k_s \geq 2$.
Define $K := K_1 \# K_2 \# \dots \# K_{\nu}$.

Then $M=S^3_{-p/q}(K)$
can be represented by a negative definite plumbing graph $G$, which is constructed as follows,
cf. \cite{Npq,NR}.
 $G$ consists of $\nu$ blocks, isomorphic to $G_1, \dots, G_{\nu}$ (without the multiplicity
 decorations and arrowheads), a chain $G_0$ of length $s$ consisting of vertices $\overline{u}_1, \dots, \overline{u}_s = u'$ with decorations $e_{\overline{u}_1} = -k_1, \dots, e_{\overline{u}_s} = -k_s$, respectively, and one central vertex $u$ which is connected to the vertex $u_j$ in each block $G_j$ and to the first vertex $\overline{u}_1$ of the chain $G_0$ with decoration $-k_1$.
 The vertex $u$ has decoration $e_u = -k_0 - \sum_{j=1}^{\nu}m_{u_j}$.

 Note that if $q=1$ then $G_0$ is empty. In this case, we have $s=0$ and $u = u'$.

We use the notation $E_v$, $v \in \mathcal{V}(G)$, for the basis
 of the lattice $L_{G}$ associated with $G$.
We simply write $(.,.)$ for the intersection form $ (.,.)_G$,
 and $E_v^{\ast}$ for the anti-dual elements  in  $G$; that is, $(E_{v'},E_v^{\ast}) = -\delta_{v,v'}$ with the Kronecker-delta notation.

 Similarly, we write $(.,.)_j = (.,.)_{G_j}$ for the intersection form of $G_j$ ($j = 0, \dots, \nu$). For any $v \in \mathcal{V}(G_j)$, we set $E_v^{\ast,j}\in L'(G_j)$ for the anti-dual of $E_v$ \emph{in the graph} $G_j$; that is, $(E_{v'},E_v^{\ast,j})_j = -\delta_{v,v'}$ with the Kronecker-delta notation, $v'\in {\mathcal V}(G_j)$. 

We denote the canonical class of $G$ by $k_G$ and the canonical class of $G_j$ by $k_{G_j}$.

 By a general fact of surgeries, $H_1(M) = H = \mathbb{Z}_p$. In fact, $[E_{u'}^{\ast}]$ is a generator of this group (see the proof of Lemma \ref{lem:UACgraph}).
Therefore $H=\{[hE_{u'}^{\ast}]\}_h$, where $h\in \{0, 1, \dots, p-1\}$.

\subsection{The structure of plumbing graph $\Gamma$ of the UAC $\Sigma $ of $M=S^3_{-p/q}(K)$}\labelpar{ss:graphs2} \

We construct a plumbing graph $\Gamma $ as follows.
$\Gamma$ consists of $\nu$ blocks $\Gamma_1, \dots, \Gamma_{\nu}$ with distinguished vertices $w_1, \dots, w_{\nu}$, a chain $\Gamma_0$ of length $q-1$ consisting of vertices $\overline{w}_1, \dots, \overline{w}_{q-1} = w'$ all with decoration $-2$,  and a `central' vertex $w$ which is connected to vertices $w_j$ (one from each block $\Gamma_j$) and to $\overline{w}_1$ at one end of the chain $\Gamma_0$. If $q=1$ then $\Gamma_0$ is empty and $w = w'$. $\Gamma_j$ is a plumbing graph of the link of the suspension hypersurface singularity $\{g_j=0\}$, where
$g_j(x,y,z_j) = f_j(x,y) + z_j^p$ (for its shape see \cite{Nemsig}). The vertex  $w_j$ of $\Gamma_j$ is that
vertex which supports the arrowhead, if we regard $\Gamma_j$ as the
embedded resolution graph of $\{z_j = 0\} \subset \{g_j = 0\}$ (that is, it supports the strict transform  of $\{z_j=0\}$).

The self--intersection of $w$ is determined as follows.

 Let $F_v$, $v \in \mathcal{V}(\Gamma)$, denote the basis elements of the lattice $L(\Gamma)$
 associated with  $\Gamma$.

We write ${\rm div}(z_j)=S(z_j)+ \sum_{v \in \mathcal{V}(\Gamma_j)} n_v F_v$
 for the total transform of  $\{z_j=0\}$
 under the embedded resolution of $\{z_j = 0\} \subset \{g_j = 0\}$ with resolution graph $\Gamma_j$.
(${\rm div}(z_j)$  topologically is characterized  by
  $({\rm div}(z_j),F_v)_{\Gamma_j}=0$ for any
  $v\in{\mathcal V}(\Gamma_j)$; the strict transform $S(z_j)$ can be represented as an arrowhead on $w_j$.)
Then, the central vertex $w$ has decoration $e_w = -1-\sum_{j=1}^{\nu} n_{w_j}$ in $\Gamma$.

\begin{lemma}\label{lem:UACgraph}
 $\Gamma$ is a (possible) plumbing  graph of the UAC $\Sigma$.
\end{lemma}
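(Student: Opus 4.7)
The plan is to compute the universal abelian cover of $M$ by regarding $M$ as a graph manifold, identifying the character on $H_1(M)$ defining the cover, and then applying the plumbing-calculus construction of cyclic covers piece by piece along $G$.

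First I would verify that $H=H_1(M)\cong \mathbb{Z}_p$ and that $[E_{u'}^{\ast}]$ is a generator: $\det(G)=p$ by a straightforward expansion using the continued-fraction decorations of the chain $G_0$, and a direct computation in $L'/L$ along the chain (using $p/q=[k_0,k_1,\dots,k_s]$) shows that $pE_{u'}^{\ast}\in L$ while no smaller positive multiple is. Consequently the UAC $\Sigma\to M$ is the $p$-fold cyclic cover corresponding to the character $\chi\colon H\to \mathbb{Z}_p$ with $\chi([E_{u'}^{\ast}])=1$.

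Next I would build the covering graph vertex by vertex, following the standard construction of abelian covers of plumbed $\mathbb{Q}HS^3$'s (cf.\ \cite{Nemsig,NemnicI}): for each vertex $v$ of $G$ one computes the order $d_v$ of the restriction of $\chi$ to the homology class of the $S^1$-fibre of the Seifert piece over $E_v$; this vertex contributes $p/d_v$ vertices to the covering graph, and each edge of $G$ splits into a number of edges governed by $\chi$ on the corresponding gluing torus. The crucial analytic input that fixes the shape of the lifted blocks is the classical fact that the $p$-fold cyclic cover of $S^3$ branched along the algebraic knot $K_j$ is the link of the suspension hypersurface singularity $\{g_j=f_j+z_j^p=0\}$, whose resolution graph is precisely $\Gamma_j$ with $w_j$ supporting the strict transform of $\{z_j=0\}$.

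With this in hand, each block $G_j$ lifts to $\Gamma_j$ with the multiplicities $m_v$ in $(f_j)$ translating to $n_v$ in ${\rm div}(z_j)$; the chain $G_0$ (encoding $p/q$) lifts to $\Gamma_0$, a chain of $q-1$ vertices all decorated by $-2$, by a Euclidean-algorithm computation from $p/q=[k_0,k_1,\dots,k_s]$ together with the fact that $\chi$ is surjective onto $\mathbb{Z}_p$ at $u'$ and its order on local Seifert fibres decreases in a controlled way toward the centre; and the central vertex $u$ is covered by a single vertex $w$ because $\chi$ vanishes on its Seifert fibre. Finally, the decoration $e_w=-1-\sum_j n_{w_j}$ is read off from the general covering formula for the self-intersection of the lift of a rational curve, combined with $e_u=-k_0-\sum_j m_{u_j}$ and the multiplicity compatibility on the edges meeting $u$ and $w$. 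The main obstacle is exactly this last step of bookkeeping: verifying that the vertex-by-vertex and edge-by-edge covering data assemble into precisely $\Gamma$ with the prescribed self-intersections and edge structure. The analytic identification of the $p$-fold branched cover of $S^3$ over $K_j$ with the link of the suspension $\{f_j+z_j^p=0\}$ is the essential shortcut that replaces a long purely combinatorial argument by the known resolution of a hypersurface singularity.
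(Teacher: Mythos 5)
Your route is the same as the paper's: identify $H\cong\Z_p$ with generator $[E_{u'}^{\ast}]$, run the cyclic--covering algorithm for plumbed $4$--manifolds of \cite{NCyc,Nemsig} blockwise over $G$, use the identification of the $p$--fold cover branched over $K_j$ with the link of the suspension $\{f_j+z_j^p=0\}$ to produce the blocks $\Gamma_j$, and recover $e_w$ from the homologically trivial multiplicity system. (The paper encodes the covering data as the divisor $pS+D$ with $D=pE_{u'}^{\ast}$, i.e.\ $(f_j)$ on $G_j$, multiplicity $1$ on $u$ and the numerators of $[k_0,\dots,k_{i-1}]$ on the chain; this is your character $\chi$ in divisor language.)

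Two of your steps, as stated, would fail. First, the count ``$E_v$ contributes $p/d_v$ vertices, where $d_v$ is the order of $\chi$ on the fibre over $E_v$'' is not the correct formula: the number of components of the preimage of the Seifert piece over $E_v$ is $p$ divided by the order of the subgroup of $\Z_p$ generated by $\chi$ of the fibre over $E_v$ \emph{and} of the fibres over all its neighbours (equivalently, by all local branching multiplicities). Already for a vertex of $G_j$ whose multiplicity $m_v$ is divisible by $p$ your formula predicts $p$ components where the correct covering graph (read off from the suspension $f_j+z_j^p$) has one vertex, possibly of positive genus. Second, and more seriously, the justification for the central vertex is backwards: the $E_u$--coefficient of $D=pE^{\ast}_{u'}$ is $1$, so $\chi$ takes a \emph{generating} value on the Seifert fibre over $E_u$; the cover is totally ramified along $E_u$, and that is why its preimage is a single curve $E_w\cong E_u$. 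If $\chi$ really vanished on that fibre, your own counting rule would produce $p$ vertices over $u$, and the computation of $e_w$ would come out differently. Finally, the lift of $G_0$ to the $(-2)$--chain $\Gamma_0$ of length $q-1$ is asserted rather than derived; the paper proves it by realizing $G_0\cup\{u\}$ as the Hirzebruch--Jung singularity $xy^{q-r}=z^q$ and computing the integral closure of the ring of its $\Z_p$--cover along ${\rm div}(zy^{k_0-1})$, landing on the $A_{q-1}$ singularity $ty=w^q$. Some computation of this kind (or an equivalent continued--fraction argument) must be supplied; it does not follow formally from the surjectivity of $\chi$ at $u'$.
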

\begin{proof}
Consider the following divisor $D$ supported on $L_G$.
On each $G_j$ it is
$(f_j)$, we put multiplicity 1 on $u$, multiplicity $k_0$ on
$\overline{u}_1$, and in general, the numerator of
$[ k_0, \ldots, k_{i-1}]$ on $\overline{u}_i$, $1\leq i\leq s$,
for notations see \S \ref{ss:graphs}. Furthermore, put
an arrowhead on $\overline{u}_s$ with multiplicity $p$. If this arrowhead represents a cut
$S$ supported by $E_{\overline{u}_s}$, then $pS+D$ has the property that
$(pS+D,E_v)=0$ for all $v\in {\mathcal V}(G)$, hence it is a topological analogue of the
divisor of a function. The algorithm which provides the (topological)
cyclic ${\mathbb Z}_p$--covering
of the plumbed 4--manifold $P$ with branch locus $pS+D$ is identical with the algorithm from
 \cite{NCyc,Nemsig} (which provides branched cyclic covers associated with analytic functions).

  The point is that $S$  has multiplicity $p$, hence the
${\mathbb Z}_p$--covering will have no branching along it, hence, in fact, the reduced
branch locus is in $\cup_vE_v$. Note that $D/p=E^*_{u'}$, and its $E_u$--coefficient is $1/p$,
hence the class of $E^*_{u'}$ (or of $D/p$)  has order $p$ in $H$, hence it  generates  $H$.
This implies that this algorithm provides exactly the UAC of $M$.
Since the algorithm is `local', and the multiplicity of $u$ is 1, over the subgraphs $G_j$ it is
identical with that one which provides the graph of the suspension
singularity $f_j+z_j^p$ (see again   \cite{NCyc,Nemsig}).

 Next we verify its behaviour over the graph $G_0$.
This graph is the graph of  a Hirzebruch--Jung singularity of type $(q,r)$, where
$q/r=[k_1,\ldots, k_s]$. (For details regarding Hirzebruch--Jung singularities see \cite{BPV}.)
This is the  normalization of $xy^{q-r}=z^q$. Using this coordinate choice,
the strict transform of $y$ is exactly $qS$, the strict transform of $x$ is a disc $S'$ in
$E_u$ (a disc neighbourhood of  $E_u\cap E_{\overline {u}_1}$ in $E_u$) with multiplicity $q$;
and finally, the strict transform of $z$ is $S'+(q-r)S$. In particular, the cyclic covering
we consider over $G_0$  is exactly the cyclic ${\mathbb Z}_p$--covering of the normalization of
$xy^{q-r}=z^q$ along the divisor of  $zy^{k_0-1}$ (here for the $S$--multiplicity use the identity $q-r +(k_0-1)q=k_0q-r=p$). This is a new Hirzebruch--Jung singularity, the normalization of
$xy^{q-r}=z^q$ and $zy^{k_0-1}=w^p$. The $q$--power of the second equation combined with the
first one gives $xy^p=w^{pq}$, hence $t:=w^q/y$ is in the integral closure with $x=t^p$.
Hence, after eliminating $x$, the new equations are $ty=w^q, \ t^py^{q-r}=z^q$ and $zy^{k_0-1}=w^p$.
A computations shows that the integral closure of this ring is given merely by $ty=w^q$.
This is an
$A_{q-1}$ singularity, whose minimal resolution graph is $\Gamma_0$.

Finally, notice that the above algorithm
provides a system of multiplicities, which can be identified with a homologically trivial
divisor, hence, similarly  as in   \cite{NCyc,Nemsig}, we get the last `missing Euler number'
$e_w$ too.
\end{proof}

The intersection form of $\Gamma$ will be denoted by $\langle.,.\rangle = (.,.)_{\Gamma}$. Similarly, $\langle.,.\rangle_j = (.,.)_{\Gamma_j}$ will denote the intersection form of $\Gamma_j$. The canonical class of $\Gamma$ is $k_{\Gamma}$, the canonical class of $\Gamma_j$ is $k_{\Gamma_j}$. For any $v \in \mathcal{V}(\Gamma)$, $F_v^{\ast}$ will denote the anti-dual of the corresponding divisor $F_v$ in $\Gamma$.
Similarly, for a vertex $v \in \mathcal{V}(\Gamma_j)$, $F_v^{\ast,j}$ is the anti-dual of $F_v$ in $\Gamma_j$.
Set $J = H_1(\Sigma)$
 and let $J_j$ be the first homology group of those 3--manifolds determined by $\Gamma_j$.

\begin{lemma}\label{lem:JJ}
 \[J \cong J_1 \times \dots \times J_{\nu}.\]
\end{lemma}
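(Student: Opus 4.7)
The plan is to construct an explicit unimodular change of $\Z$-basis of $L_\Gamma$ that splits the intersection form $\langle\cdot,\cdot\rangle$ as an orthogonal direct sum $\bigoplus_{j=1}^\nu I_{\Gamma_j} \oplus \widetilde{C}$, with the new block $\widetilde{C}$ visibly unimodular. Taking cokernels then yields
\[ J = \coker(I_\Gamma) \cong \bigoplus_j \coker(I_{\Gamma_j}) \oplus \coker(\widetilde{C}) = \bigoplus_j J_j, \]
which coincides with $\prod_j J_j$ since all the groups involved are finite.

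The key step is to replace the basis vector $F_w$ by
\[ \widetilde{F}_w := F_w + \sum_{j=1}^{\nu} \sum_{v \in \mathcal{V}(\Gamma_j)} n_v F_v, \]
while keeping every other basis vector unchanged. Since the $n_v$ are integers, this is a triangular change of basis of determinant $1$, hence unimodular over $\Z$. The crucial input from singularity theory is that $(z_j) := \sum_v n_v F_v \in L_{\Gamma_j}$ satisfies $\langle (z_j), F_v\rangle_j = -\delta_{v,w_j}$ for every $v \in \mathcal{V}(\Gamma_j)$; this is a direct consequence of the topological condition $\langle \mathrm{div}(z_j), F_v\rangle_j = 0$ combined with the fact that the strict transform $S(z_j)$ is the arrowhead on $w_j$.

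Using this identity, together with the fact that different blocks $\Gamma_j$, $\Gamma_k$ share no vertices or edges in $\Gamma$, one verifies that in the new basis the intersection form is block diagonal. Indeed, for $v \in \mathcal{V}(\Gamma_j)$ the contribution $\langle F_w, F_v\rangle = \delta_{v,w_j}$ is exactly cancelled by $\langle (z_j), F_v\rangle_j = -\delta_{v,w_j}$, giving $\langle \widetilde{F}_w, F_v\rangle = 0$. The pairings with the chain are $\langle \widetilde{F}_w, F_{\overline{w}_1}\rangle = 1$ and $\langle \widetilde{F}_w, F_{\overline{w}_i}\rangle = 0$ for $i \geq 2$, inherited from $F_w$ (the $(z_j)$-correction contributes nothing here). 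Using $e_w = -1 - \sum_j n_{w_j}$ and the identity $\langle (z_j),(z_j)\rangle_j = -n_{w_j}$ (immediate from $\langle(z_j), F_v\rangle_j = -\delta_{v, w_j}$), the self-pairing simplifies to $\langle \widetilde{F}_w, \widetilde{F}_w\rangle = e_w + \sum_j n_{w_j} = -1$.

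Consequently, in the new basis $I_\Gamma$ decomposes as $\bigoplus_j I_{\Gamma_j}$ on the $\mathcal{V}(\Gamma_j)$-blocks, plus a $q \times q$ chain $\widetilde{C}$ on $\{\widetilde{F}_w, F_{\overline{w}_1}, \dots, F_{\overline{w}_{q-1}}\}$ with diagonal $(-1, -2, \dots, -2)$ and ones on the off-diagonals. The Hirzebruch--Jung continued fraction $[1, 2, \dots, 2]$ with $q-1$ twos equals $1/q$, so $\widetilde{C}$ is the plumbing matrix of $S^3$; in particular $|\det \widetilde{C}| = 1$ and $\coker(\widetilde{C}) = 0$, which completes the proof. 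The main obstacle is identifying the correct change of basis: it is not a purely combinatorial trick but is supplied by the coordinate function $z_j$ on the suspension singularity $\{f_j + z_j^p = 0\}$, whose exceptional divisor $(z_j)$ provides precisely the integer combination needed to orthogonalize $F_w$ against each sublattice $L_{\Gamma_j}$.
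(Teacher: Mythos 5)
Your proof is correct, and it takes a genuinely different route from the paper's. The paper argues dually, via characters: it quotes from N\'emethi--Nicolaescu (SW invariants and surface singularities III, \S 6.3) that every character of $J_j$ takes the value $1$ on $F^{*,j}_{w_j}$, uses this to embed $\prod_j \widehat{J}_j$ into $\widehat{J}$, and then upgrades the embedding to an isomorphism by a cardinality count, proving $\det(\Gamma)=\prod_j\det(\Gamma_j)$ with the edge--deletion determinant formula for decorated trees. You instead work directly in the lattice: the substitution $F_w\mapsto F_w+\sum_j(z_j)$ is unimodular because the $n_v$ are integers, and it orthogonalizes the central vertex against each block precisely because $(z_j)=F^{*,j}_{w_j}$ lies in $L_{\Gamma_j}$ (the same key fact the paper imports, here rederived from $\langle \mathrm{div}(z_j),F_v\rangle_j=0$ with $S(z_j)$ an arrowhead at $w_j$); your computations of $\langle\widetilde F_w,F_v\rangle=0$, of $\langle\widetilde F_w,\widetilde F_w\rangle=e_w+\sum_j n_{w_j}=-1$, and of $|\det\widetilde C|=1$ (numerator of $[1,2,\dots,2]=1/q$) all check out, and the block decomposition of the Gram matrix passes to cokernels. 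What your approach buys is an explicit orthogonal splitting $L_\Gamma\cong\bigoplus_j L_{\Gamma_j}\oplus\widetilde C$ up to a unimodular summand, hence a concrete isomorphism of discriminant groups rather than an abstract one obtained by counting; it also replaces the external citation and the tree--determinant induction by an elementary $q\times q$ determinant. The only points worth making explicit are that the intersection form of $\Gamma$ restricted to a block $\mathcal{V}(\Gamma_j)$ coincides with $\langle\cdot,\cdot\rangle_j$ (true, since assembling $\Gamma$ does not change self--intersections inside the blocks) and that $\coker(P^{T}IP)\cong\coker(I)$ for unimodular $P$; both are standard.
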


\begin{proof}
Let $\rho_j\in \widehat{J}_j$ be a character of $\Gamma_j$, $j\geq 1$. In \cite[\S 6.3]{NemnicIII}
is proved that $\rho_j$ takes value 1 on $F^{*,j}_{w_j}$ (recall that the vertex $w_j$
of $\Gamma_j$ is connected with
the central vertex $w$). Hence, for $j\not=i$, $j,i\geq 1$, there is no edge $(v_j,v_i)$
of  $\Gamma$, such that $v_j$ is in the support of $\rho_j$ and $v_i$ is in the support of
$\rho_i$. This means that each $\rho_j\in\widehat{J}_j$ can be extended to a character
of $J$, by setting  $\rho_j(F^*_v)=1$ whenever $v\not\in{\mathcal V}(\Gamma_j)$; in this way providing
a monomorphism $\widehat{J}_j\hookrightarrow \widehat{J}$. But the same property also guarantees
that in fact one has a simultaneous embedding
$\prod_{j\geq 1} \widehat{J}_j\hookrightarrow \widehat{J}$.

Therefore, if we prove that  $\prod_{j\geq 1}\det(\Gamma_j)=\det(\Gamma)$, then the above embedding becomes an isomorphism, hence Lemma follows.
In determinant computations of decorated trees, the following formula is useful;  see e.g.
\cite[4.0.1(d)]{NB}.

Let $e$ be an edge of $\Gamma$ with end vertices $a$ and $b$.
Then $\det(\Gamma)=\det(\Gamma\setminus e)-\det(\Gamma\setminus \{a,b\})$.

This formula inductively (applied for the edges adjacent to $w$) provides the needed determinant
identity.
\end{proof}

 \begin{lemma}\label{cl:eq}

\begin{equation}\label{eq:intform1}\begin{split}   
(a) \ \ \ \ \,
- p\cdot (E_u^{\ast}, E_u^{\ast}) =& \ q \ \ \textrm{\ and\ } \ \  -p\cdot (E_{u'}^{\ast}, E_u^{\ast}) = 1;\\
(b) \ \ \ \    q\cdot(E_{u_j}^{\ast,j}, E_v^{\ast,j})_j = & \ p\cdot (E_u^{\ast},E_v^{\ast}) \ \ \
\mbox{for any  $v \in \mathcal{V}(G_j)$, $j \geq 1$};\\
(c) \ \ \ \ \ \ \ \
- \langle F_{w}^{\ast}, F_{w}^{\ast} \rangle = & \ q \ \ \textrm{\ and\ } \ \ -\langle F_{w'}^{\ast}, F_{w}^{\ast} \rangle = 1;\\
(d) \ \ \ \ q\cdot \langle F_{w_j}^{\ast,j}, F_v^{\ast,j} \rangle_j = & \
 \langle F_w^{\ast}, F_v^{\ast} \rangle
 \ \ \ \mbox{for any $v \in \mathcal{V}(\Gamma_j)$, $j \geq 1$}.\end{split}\end{equation}
\end{lemma}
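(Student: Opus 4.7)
The plan is to deduce all four identities from one determinant-theoretic formula. For any tree plumbing with negative definite intersection matrix $I_G$, Cramer's rule applied to $I_G\,E_v^\ast = -\mathbf{1}_v$ yields
$$-(E_v^\ast,E_w^\ast) \;=\; \frac{\det\bigl(G\setminus \mathrm{path}(v,w)\bigr)}{\det(G)},$$
where $\mathrm{path}(v,w)$ is the vertex set of the unique geodesic between $v$ and $w$ in $G$, $\det(G)=\det(-I_G)$, and the sign is fixed by the standard non-positivity of the inner product of anti-duals on a negative definite tree. Each of (a)--(d) then becomes a short determinant computation exploiting the block decomposition obtained by deleting the central vertex.

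For (a) and (b) I would first record the basic data: $\det(G)=p$ (the order of $H$), $\det(G_j)=1$ for $j\geq 1$ since $G_j$ represents $S^3$, and $\det(G_0)=q$ as the numerator of the continued fraction $[k_1,\dots,k_s]$. Removing $u$ from $G$ disconnects it into $G_0\sqcup G_1\sqcup\cdots\sqcup G_\nu$, giving $\det(G\setminus u)=q$ and hence $-(E_u^\ast,E_u^\ast)=q/p$. The path from $u$ to $u'$ lies in $\{u\}\cup G_0$; removing it leaves $G_1\sqcup\cdots\sqcup G_\nu$, so $-(E_{u'}^\ast,E_u^\ast)=1/p$. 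This is (a). For (b), the path from $u$ to a vertex $v\in\mathcal V(G_j)$ stays in $\{u\}\cup G_j$; removing it produces $G_0\sqcup\bigsqcup_{i\neq j}G_i\sqcup\bigl(G_j\setminus \mathrm{path}_{G_j}(u_j,v)\bigr)$, with determinant $q\cdot\det\bigl(G_j\setminus \mathrm{path}_{G_j}(u_j,v)\bigr)$. Applying the same formula inside $G_j$ (where $\det(G_j)=1$) matches this ratio against $(q/p)\cdot(E_{u_j}^{\ast,j},E_v^{\ast,j})_j$, which is exactly the claim.

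The arguments for (c) and (d) are formally identical, run inside $\Gamma$. The inputs are $\det(\Gamma_0)=q$ (the $A_{q-1}$ chain determinant) and $\det(\Gamma)=\prod_{j\geq 1}\det(\Gamma_j)$, the latter being precisely the determinant identity established in the proof of Lemma \ref{lem:JJ}. Removing $w$ disconnects $\Gamma$ into $\Gamma_0\sqcup\bigsqcup_j\Gamma_j$, so $\det(\Gamma\setminus w)=q\cdot\prod_j\det(\Gamma_j)=q\,\det(\Gamma)$, and thus $-\langle F_w^\ast,F_w^\ast\rangle=q$. Removing the path $\{w\}\cup\Gamma_0$ from $w$ to $w'$ leaves $\bigsqcup_j\Gamma_j$, giving $-\langle F_{w'}^\ast,F_w^\ast\rangle=1$. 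For (d), removing $\mathrm{path}(w,v)$ for $v\in\mathcal V(\Gamma_j)$ leaves $\Gamma_0\sqcup\bigsqcup_{i\neq j}\Gamma_i\sqcup\bigl(\Gamma_j\setminus \mathrm{path}_{\Gamma_j}(w_j,v)\bigr)$; dividing by $\det(\Gamma)$ the factors $\prod_{i\neq j}\det(\Gamma_i)$ cancel, leaving $q\cdot\det\bigl(\Gamma_j\setminus \mathrm{path}_{\Gamma_j}(w_j,v)\bigr)/\det(\Gamma_j)$, which is $q\cdot\langle F_{w_j}^{\ast,j},F_v^{\ast,j}\rangle_j$.

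There is no genuine obstacle here: the only conceptual input is the path-determinant formula with the correct sign (classical for negative definite trees), and the rest is combinatorial bookkeeping based on how the central vertex separates $G$ and $\Gamma$ into their constituent blocks. The key numerics $\det(G_j)=1$, $\det(G_0)=\det(\Gamma_0)=q$, $\det(G)=p$, and $\det(\Gamma)=\prod_j\det(\Gamma_j)$ slot directly into the four identities.
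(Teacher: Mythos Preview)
Your proposal is correct and follows essentially the same approach as the paper: both invoke the path--determinant formula $-\det(\frG)\cdot(E_a^\ast,E_b^\ast)=\prod\det(\text{components of }\frG\setminus\mathrm{path}(a,b))$ for negative definite trees, plug in the numerics $\det(G)=p$, $\det(G_j)=1$, $\det(G_0)=\det(\Gamma_0)=q$, and for (c)--(d) appeal to the identity $\det(\Gamma)=\prod_{j\geq 1}\det(\Gamma_j)$ from Lemma~\ref{lem:JJ}. Your write-up is somewhat more explicit about the combinatorial bookkeeping, but the argument is the same.
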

\begin{proof}  For a negative definite plumbing
graph $\frG$ (which is a tree) if we define the anti--duals  $E^*_v$ as above,
then the following holds:
for any two vertices $a,b$ the expression $-\det(\frG)\cdot(E^*_a,E^*_b)$ equals the product of the determinants of the connected components of that graph which is obtained from $\frG$ by deleting
the shortest path connecting $a$ and $b$ and the adjacent edges;
 see \cite[\S 10]{EN} in the integral homology case and \cite{NemnicI} in general.

This applied for $G$ and $a=b=u$ (and $a=u$, $b=u'$)
 gives $(a)$, since $\det(G)=p$, $\det(G_j)=1$ for $j\geq 1$ and
$\det(G_0)=q$. (b) follows similarly.
(c) and (d) follows from this property combined with Lemma \ref{lem:JJ}.
\end{proof}

\section{Additivity property of the invariant $\mathfrak{s}$}\labelpar{s:main}

\subsection{Proof of the main theorem}\labelpar{ss:mainproof}

Now we are ready to prove Theorem \ref{thm:main0}. To adjust it to its proof, we recall it in a more explicit form, in the language of plumbing graphs.

\begin{theorem}\label{thm:main}
Let $K$ be a connected sum of algebraic knots, $p, q$ coprime positive integers. Assume that
 both $S^3_{-p/q}(K)$ (having plumbing graph $G$) and its
  universal abelian cover $\Sigma$ (with plumbing graph $\Gamma$)
 are  rational homology spheres.
  Then the following additivity holds:
\[ \underbrace{\frsw_{0}(\Gamma) - \frac{\langle k_{\Gamma},k_{\Gamma}\rangle + \#\mathcal{V}(\Gamma)}{8}}_{\mathfrak{s}_0(\Gamma)} =
   \sum_{h = 0}^{p-1} \underbrace{\left[ \frsw_h(G) - \frac{(k_G + 2r_h, k_G + 2r_h) + \#\mathcal{V}(G)}{8} \right]}_{\mathfrak{s}_h(G)}. \]
\end{theorem}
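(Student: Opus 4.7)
My approach is to transfer Theorem \ref{thm:main} from topology to analysis via Claim \ref{cl:pgadd}. Concretely, the plan is to produce a normal surface singularity $(X,0)$ whose link is $M = S^3_{-p/q}(K)$ and whose analytic universal abelian cover $(Y,0)$ has link $\Sigma$; to verify the Equivariant Seiberg--Witten Invariant Conjecture (EqSWIC) for $(X,0)$; to verify the classical SWIC for $(Y,0)$; and then to combine these with the analytic identity $p_g(Y) = \sum_{h \in H} p_g(X)_h$ (which comes from the eigenspace decomposition of $H^1(\widetilde Y, \mathcal{O}_{\widetilde Y})$ under the Galois action of $H$) to obtain the CAP.

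For the realization step I would use the generalized superisolated construction: start from a projective plane curve $C$ of suitable degree whose local singularities at $\nu$ chosen points realize the knots $K_1,\ldots,K_\nu$, and form a hypersurface germ in $(\mathbb{C}^3,0)$ whose defining equation combines the homogeneous polynomial of $C$ with a correction term of degree tied to $p$ and $q$; the decoration data should be arranged so that the dual resolution graph of the resulting $(X,0)$ is exactly the graph $G$ of \S\ref{ss:graphs}. The analytic universal abelian cover $(Y,0)$ is then produced by the $p$-fold cyclic cover prescribed by the divisor $D$ in the proof of Lemma \ref{lem:UACgraph}; that lemma already shows this cover has plumbing graph $\Gamma$, so on each block $\Gamma_j$ one recovers the embedded resolution of the suspension singularity $\{f_j(x,y) + z^p = 0\}$, while the chain $\Gamma_0$ arises as the $A_{q-1}$ resolution produced from the Hirzebruch--Jung chain $G_0$.

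The main analytical task, and the principal obstacle, is to verify the two conjectures. For $(Y,0)$ I would exploit the product structure $J \cong J_1 \times \cdots \times J_\nu$ of Lemma \ref{lem:JJ} to reduce the SWIC to statements for each suspension $\{f_j + z_j^p = 0\}$; these belong to a class of splice-type (or Newton non-degenerate) singularities where the SWIC has been established in earlier work cited in the paper, and the attached rational chain $\Gamma_0$ contributes trivially by Proposition \ref{prop:ratgraph}. For $(X,0)$ the EqSWIC must be checked character by character, and this is where the bulk of the technical work lies: I would compute both $p_g(X)_h$ and $\mathfrak{s}_h(M)$ via the multivariable Poincar\'e series and lattice-cohomology machinery of \cite{NemSW,Nsplice}, using the intersection identities of Lemma \ref{cl:eq} and the shift formula \eqref{eq:eulatgen} to change between representatives $l'$ of a given class $h$. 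The hardest step will be to match, for each $h \in H$, the local contribution of each block $G_j$ with the appropriate twisted piece of the equivariant geometric genus, and to check that the Hirzebruch--Jung chain $G_0$ plays the passive role suggested by Proposition \ref{prop:ratgraph}; once both conjectures hold, summing over $h$ and using the analytic additivity $p_g(Y) = \sum_h p_g(X)_h$ yields the desired topological CAP.
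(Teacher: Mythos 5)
Your overall strategy --- realize $M$ as the link of a singularity $(X,0)$, verify the EqSWIC for $(X,0)$ and the SWIC for its analytic universal abelian cover $(Y,0)$, and conclude via Claim \ref{cl:pgadd} --- is not the paper's route, and it cannot work in general, because the two conjectures you propose to verify are actually \emph{false} for the singularities in question. The paper itself supplies a counterexample: in Example \ref{ex:ineq} ($\nu=1$, $d=4$, $K_1$ the $(2,7)$ torus knot, realized by the superisolated germ $(zy-x^2)^2-xy^3+z^5=0$) one has $p_g(Y)<10=\mathfrak{s}_0(\Sigma)$, so the SWIC fails for $(Y,0)$ and consequently $p_g(X)_h<\mathfrak{s}_h(M)$ for at least one $h$, i.e.\ the EqSWIC fails for $(X,0)$ (cf.\ Remark \ref{rem:END2}). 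Superisolated singularities are singled out in the paper precisely as the main source of counterexamples to the SWIC, which is why the authors do \emph{not} deduce the theorem from Claim \ref{cl:pgadd}. There is also a secondary problem with your realization step: the superisolated construction forces $(d-1)(d-2)=2\delta$, so for general $K$, $p$, $q$ no such hypersurface germ exists; an abstract realization of $G$ as a resolution graph always exists, but then one has no control over its $p_g$ or its equivariant refinements.

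The paper's actual proof uses analytic input only where it is a \emph{theorem}, never the global conjectures. It applies the surgery (cut-and-paste) formula of \cite{NB} to both $G$ (deleting the central vertex $u$) and $\Gamma$ (deleting $w$), which reduces the identity to three lemmas: (i) $\sum_{h}\mathcal{H}^{\textrm{pol}}_{u,h}(1)=\mathcal{F}^{\textrm{pol}}_{w,0}(1)$, proved by expressing both rational functions through the Alexander polynomials and using $\Delta_j=\Delta_{j,\Gamma}$; (ii) $\sum_{h}\chi_j(R_j(r_h))=\mathfrak{s}_0(\Gamma_j)$ for $j\geq 1$, which invokes the SWIC only for the \emph{suspension} singularities $\{f_j+z_j^p=0\}$ (a theorem of \cite{NemnicIII}) together with a new geometric genus formula (Claim \ref{cl:susppg}) established by an eigensheaf decomposition of $(\widehat{c}_\phi)_*(\mathcal{O}_{\widehat{W}})$; and (iii) the vanishing of the $G_0$-contribution, obtained by reducing to the lens-space case where CAP is already known. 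Your proposal correctly anticipates the role of the suspension singularities, of Lemma \ref{lem:JJ}, and of Proposition \ref{prop:ratgraph} for the chain, but the global reduction to EqSWIC/SWIC is the step that breaks, and it cannot be repaired within your framework.
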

On the left hand side $0$ is the unit element of $J = H_1(\Sigma)$ and on the right hand side we identified elements of $H \cong \mathbb{Z}_p$ with elements of $\{0, 1, \dots, p-1\}$, $0$ being the unit element and $1$ being the generator $[E_{u'}^{\ast}]$, \textit{i.e.}, $r_h = r_{[hE_{u'}^{\ast}]}$.

In fact, the condition whether $\Sigma$ is a $\mathbb{Q}HS^3$ or not is readable already from $p$ and the plane curve singularity invariants describing the knots $K_i$; cf. \cite[\S 6.2 (c)]{NemnicIII}.

\begin{proof}

 Notice that deleting from $G$ the `central vertex' $u$ (and all its adjacent  edges), one gets $G_0, G_1, \dots, G_{\nu}$ as connected components of the remaining graph. Also,
 deleting from $\Gamma$ the `central vertex' $w$ (and all its  adjacent edges), one gets $\Gamma_0, \Gamma_1, \dots, \Gamma_{\nu}$ as connected components of the remaining graph. ($\Gamma_0$ and
 $G_0$ are present only if $q > 1$).

We use the notations  $R_j$, resp. $\widetilde{R}_j$ for the  `restriction'
homomorphisms  $L_{G}' \rightarrow L_{G_j}'$, resp.
$L_{\Gamma}' \rightarrow L_{\Gamma_j}'$, dual to the natural inclusions
  $L_{G_j} \rightarrow L_{G}$, resp. $L_{\Gamma_j} \rightarrow L_{\Gamma}$. They are
characterised by  $R_j(E_v^{\ast}) = E_v^{\ast,j}$, if $v \in \mathcal{V}(G_j)$ and $0$
otherwise, resp.
$\widetilde{R}_j(F_v^{\ast}) = F_v^{\ast,j}$, if $v \in \mathcal{V}(\Gamma_j)$ and $0$ otherwise.
 E.g.,  $R_j(k_G) = k_{G_j}$ and $\widetilde{R}_j(k_{\Gamma}) = k_{\Gamma_j}$
 (cf. \cite[Def. 3.6.1 (2)]{NB}).

 We can apply the surgery (`cut-and-paste') formula of \cite[Theorem 1.0.1]{NB} (note the sign difference due to the different sign convention about $\frsw$) and get the following two formulae. The new symbols
 $\mathcal{H}^{\textrm{pol}}_{u,h}(1)$ and   $ \mathcal{F}^{\textrm{pol}}_{w,0}(1)$
 are values of polynomials in $t=1$ as in \cite[\S 3.5]{NB};
 their definitions will be recalled later in \eqref{eq:defh} and \eqref{eq:deff}.
\begin{equation}\label{eq:swsis2}
   \mathfrak{s}_h(G) = \mathcal{H}^{\textrm{pol}}_{u,h}(1) +
   \mathfrak{s}_{R_0(r_h)}(G_0) + \sum_{j=1}^{\nu}
   \mathfrak{s}_{R_j(r_h)}(G_j), 
\end{equation}
\begin{equation}\label{eq:swuac2}
   \mathfrak{s}_0(\Gamma) = \mathcal{F}^{\textrm{pol}}_{w,0}(1) +
   \mathfrak{s}_0(\Gamma_0) + \sum_{j=1}^{\nu} \mathfrak{s}_0(\Gamma_j).
\end{equation}
In \eqref{eq:swsis2}, for $j \geq 1$, $[R_j(r_h)] = 0 \in L_{G_j}'/L_{G_j}$, as the latter one is the trivial group $H_1(S^3)$. Hence, by (\ref{eq:eulatgen}),
  $\mathfrak{s}_{R_j(r_h)}(G_j)=\mathfrak{s}_0(G_j)+
  \chi_j(R_j(r_h))$, where  $\chi_j(x) := -\frac{1}{2}(x,x+k_{G_j})_j$.

 Furthermore, by Proposition \ref{prop:ratgraph},
 $\mathfrak{s}_0(\Gamma_0) = 0$,
 and  $\mathfrak{s}_0(G_j)  = 0$ for $j \geq 1$ (as $G_j$ is a plumbing graph for $S^3$).
 Therefore,  the desired equality $\mathfrak{s}_0(\Gamma) = \sum_{h=0}^{p-1} \mathfrak{s}_h(G)$ reduces to the proof of the following three lemmas.
 \end{proof}

 \begin{lemma}\label{lem:pol}
  \[  \sum_{h=0}^{p-1} \mathcal{H}^{\textrm{pol}}_{u,h}(1) = \mathcal{F}^{\textrm{pol}}_{w,0}(1). \]
 \end{lemma}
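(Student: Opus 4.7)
The plan is to identify both $\mathcal{H}^{\textrm{pol}}_{u,h}(1)$ and $\mathcal{F}^{\textrm{pol}}_{w,0}(1)$ with values at $t=1$ of polynomial parts of explicit one-variable rational series extracted from the multivariable topological zeta functions of $G$ and $\Gamma$, and then to match the two sides using the branch factorisation at the central node together with Lemma \ref{cl:eq} and Lemma \ref{lem:UACgraph}.

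By \cite[\S 3.5]{NB}, $\mathcal{H}^{\textrm{pol}}_{u,h}(t)$ is the polynomial part of a one-variable series $\mathcal{H}_{u,h}(t)$ obtained by taking the $h$-isotypic component of the multivariable zeta function $Z_G(\mathbf{t})$ associated with $G$ and reducing it in the direction of $E_u^{\ast}$; the analogous description holds for $\mathcal{F}^{\textrm{pol}}_{w,0}(t)$ in terms of $Z_{\Gamma}$, $F_w^{\ast}$, and the trivial $J$-isotypic component. Next I would factor each zeta function according to the branches at the central vertex. On the $G$-side the factor corresponding to $G_j$ ($j\geq 1$) is the Poincar\'e series of the plane curve singularity $\{f_j=0\}$, and the factor over $G_0$ is a Hirzebruch--Jung contribution for $p/q = [k_0,\dots,k_s]$; on the $\Gamma$-side, the $\Gamma_j$-factor encodes the suspension singularity $\{f_j + z_j^p=0\}$ (related to the $G_j$-factor by the $p$-fold cyclic cover along $\{f_j=0\}$, cf.\ \cite{Nemsig}), and the $\Gamma_0$-factor encodes the $A_{q-1}$-chain.

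Using Lemma \ref{cl:eq}, the weight relations $p\cdot(E_u^{\ast},E_v^{\ast}) = \langle F_w^{\ast},F_v^{\ast}\rangle$ for $v\in\mathcal{V}(\Gamma_j)$ ($j\geq 1$) and $-p\cdot(E_u^{\ast},E_u^{\ast})=q=-\langle F_w^{\ast},F_w^{\ast}\rangle$ show that the direction of reduction on the $G$-side is a $p$-fold refinement of the direction on the $\Gamma$-side. Summing $\sum_{h=0}^{p-1}$ removes the $h$-isotypic projection on the $G$-side, so that $\sum_h \mathcal{H}_{u,h}(t)$ becomes the full unprojected series; via the change of variable $t \mapsto t^{1/p}$ induced by the weight rescaling and the decomposition of $J$ in Lemma \ref{lem:JJ}, this agrees with $\mathcal{F}_{w,0}(t)$. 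Taking polynomial parts at $t=1$ then yields the desired identity, since extraction of the polynomial part commutes with finite summation and with rational reparametrisation.

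\textbf{Main obstacle.} The hardest step is the clean identification of the two chain contributions under the cover: the Hirzebruch--Jung chain of $G_0$ (with continued fraction $p/q$) and the $A_{q-1}$-chain of $\Gamma_0$ are of very different shapes, yet their factors in the zeta functions must align after summing over $h$. This should reduce to an explicit combinatorial/Ehrhart-type identity deducible from the covering description in Lemma \ref{lem:UACgraph}, specifically from the appearance of the $A_{q-1}$ singularity as the normalisation of the auxiliary Hirzebruch--Jung singularity constructed there. In addition, one has to verify that the ``periodic'' parts of the series (which are discarded when extracting the polynomial parts) truly cancel after summation over $h \in H$, rather than merely recombining into a non-polynomial residual.
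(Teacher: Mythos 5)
Your overall strategy is the paper's: summing over $h$ collapses the character average in \eqref{eq:defh} to the single product $\mathcal{H}_u(t)=\prod_{v}(1-t^{-p(E_u^{\ast},E_v^{\ast})})^{\delta_v-2}$, polynomial-part extraction is additive, and one then proves an identity of rational functions between $\mathcal{H}_u$ and $\mathcal{F}_{w,0}$ after a monomial substitution (the correct substitution is $s=t^{|J|}$, forced by the normalisation $-|J|\langle F_w^{\ast},F_v^{\ast}\rangle$ in \eqref{eq:deff}, not $t\mapsto t^{1/p}$). However, you have located the difficulty in the wrong place. The chain matching you single out as the ``main obstacle'' is a non-issue: every interior vertex of $G_0$ and of $\Gamma_0$ has $\delta_v=2$, hence exponent $\delta_v-2=0$, and drops out of the product entirely. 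Only the end vertices $u'$ and $w'$ survive, and Lemma \ref{cl:eq}(a),(c) give $-p(E_{u'}^{\ast},E_u^{\ast})=1$ and $-\langle F_{w'}^{\ast},F_w^{\ast}\rangle=1$, so each chain contributes exactly one factor $(1-t)^{-1}$ (resp.\ $(1-s)^{-1}$); together with the central vertex one gets $\mathcal{H}_u(t)=\prod_j\Delta_j(t^q)/\bigl((1-t)(1-t^q)\bigr)$ and the analogous formula for $\mathcal{F}_{w,0}$. No Ehrhart-type identity is needed.

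The step that actually carries the content of the lemma is missing from your sketch: the identification of the branch factors over $\Gamma_j$. After averaging over the characters of $J_j$ (using $J\cong\prod_j J_j$ from Lemma \ref{lem:JJ} and relation (d) of Lemma \ref{cl:eq}), the product over $\mathcal{V}(\Gamma_j)$ equals $\Delta_{j,\Gamma}(s)/(1-s)$, where $\Delta_{j,\Gamma}$ is the Alexander polynomial of the knot $\{z_j=0\}$ in the link of the suspension $\{f_j+z_j^p=0\}$, while the corresponding $G_j$-factor gives $\Delta_j(t^q)/(1-t^q)$ with $\Delta_j$ the Alexander polynomial of $K_j\subset S^3$. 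The lemma therefore reduces to the nontrivial equality $\Delta_{j,\Gamma}=\Delta_j$, which is \cite[Prop.~6.6]{NemnicIII}. Your proposal only says the two factors are ``related by the $p$-fold cyclic cover''; without this precise identity (or a proof of it) the argument does not close. Finally, your worry about periodic parts ``recombining into a non-polynomial residual'' is unfounded once the rational-function identity $\mathcal{H}_u(t^{|J|})=\mathcal{F}_{w,0}(t)$ is established: polynomial-part extraction commutes with the substitution $t\mapsto t^{|J|}$ because that substitution preserves the property that the numerator degree is strictly less than the denominator degree, and evaluating the polynomial part at $t=1$ is then unambiguous.
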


 \begin{lemma}\label{lem:susp}
  \[ \hspace*{2cm} \sum_{h=0}^{p-1} \chi_j(R_j(r_h)) = \mathfrak{s}_0(\Gamma_j) \ \ \ \
  (\mbox{for $j \geq 1$}).\]
 \end{lemma}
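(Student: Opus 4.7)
We identify both sides of the stated equation with the geometric genus of the suspension hypersurface germ $(Y_j,0):=\{f_j(x,y)+z^p=0\}\subset(\mathbb{C}^3,0)$, whose link has plumbing graph $\Gamma_j$ by the construction preceding Lemma~\ref{lem:UACgraph}. For the right-hand side, since $f_j$ is an irreducible plane curve germ, the Seiberg--Witten Invariant Conjecture is known for the suspension $(Y_j,0)$ (cf.\ \cite{Npq,Nsplice,NemnicI,NemnicIII}), so $\mathfrak{s}_0(\Gamma_j)=p_g(Y_j)$.

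For the left-hand side, the projection $(x,y,z)\mapsto(x,y)$ realizes $(Y_j,0)$ as a $\mathbb{Z}_p$-cyclic cover of $(\mathbb{C}^2,0)$ branched along $\{f_j=0\}$. Let $\tilde X_j\to(\mathbb{C}^2,0)$ be the embedded resolution with exceptional graph $G_j$ and arrowhead $S(f_j)$ at $u_j$, and set $D:=(f_j)+S(f_j)$ for the total transform (so that $(f_j)=\sum_{v\in\mathcal{V}(G_j)}m_v^j E_v=E_{u_j}^{\ast,j}$ in $L'_{G_j}$, since $({\rm div}(f_j),E_v)_j=0$ forces $((f_j),E_v)_j=-\delta_{v,u_j}$). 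On an equivariant resolution, the push-forward $\pi_{\ast}\mathcal{O}_{\tilde Y_j}$ splits into $p$ eigenline bundles $\mathcal{L}_h$ indexed by characters of $\mathbb{Z}_p$. Since the trivial-eigenvalue piece contributes $h^1(\mathcal{O}_{\tilde X_j})=0$ (smooth base), this yields $p_g(Y_j)=\sum_{h=1}^{p-1}h^1(\tilde X_j,\mathcal{L}_h)$; a Laufer/Riemann--Roch computation on the rational surface $\tilde X_j$ then expresses each summand as a $\chi_j$-value of the associated exceptional cycle.

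To identify these cycles with $R_j(r_h)$, use the relation $E_{u'}^{\ast}=D_G/p$ established in the proof of Lemma~\ref{lem:UACgraph}: the $E_v$-coefficient of $r_h=\{hE_{u'}^{\ast}\}$ equals $\{hm_v^j/p\}$ for $v\in\mathcal{V}(G_j)$, while its $E_u$-coefficient equals $h/p$. Since $u$ is adjacent to $u_j$ in $G$ but not in $G_j$, one has $R_j(E_u)=-E_{u_j}^{\ast,j}$, and combining this with the identity $(f_j)=E_{u_j}^{\ast,j}$ gives
\[
R_j(r_h)\;=\;-\lfloor hD/p\rfloor\big|_{\mathrm{exc}}.
\]
The matching $\chi_j(R_j(r_h))=h^1(\tilde X_j,\mathcal{L}_h)$ then follows from the quadratic identities relating $\chi_j(Z)$ and $\chi_j(-Z)$, together with the numerical triviality $(D,E_v)_j=0$ on $G_j$, and summing in $h$ completes the proof.

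\emph{Main obstacle.} The cohomological step is the technical heart: the identity $h^1(\mathcal{O}(-Z))=\chi_j(Z)$ is \emph{not} universally valid on a resolution but requires $Z$ to satisfy an antinef-type condition. One must verify that the floor cycles $\lfloor hD/p\rfloor$ for $1\le h\le p-1$ lie in this regime---exploiting the antinef property of $(f_j)=E_{u_j}^{\ast,j}$ (indeed, $((f_j),E_v)_j=-\delta_{v,u_j}\le 0$) and a careful integer-part analysis of its rational multiples---which is where the singularity-theoretic input is genuinely essential, beyond purely combinatorial manipulation.
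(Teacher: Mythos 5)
Your overall strategy coincides with the paper's: both sides are identified with $p_g(\{f_j+z_j^p=0\})$ (the right-hand side via the SWIC for suspension singularities from \cite{NemnicIII}, the left-hand side via an eigensheaf decomposition of the cyclic cover), and your derivation of $R_j(r_h)=-\lfloor h\cdot (f_j)/p\rfloor$ is correct and equivalent to the paper's. The genuine gap is in the cohomological step, which is exactly where the work lies. You assert that the pushforward of the structure sheaf ``splits into $p$ eigenline bundles $\mathcal{L}_h$'' and that Riemann--Roch ``then expresses each summand as a $\chi_j$-value of the associated exceptional cycle'', but you never identify $\mathcal{L}_h$. What is needed is the precise identification $(\widehat c_\phi)_*\mathcal{O}_{\widehat W}\cong\bigoplus_{h}\mathcal{O}_Z(\lfloor h\cdot(f_j)/p\rfloor)$, and establishing it is the technical heart of the proof: the fibre product $W=Z\times_{\mathbb{C}^2}\{g_j=0\}$ is not normal, and the eigensheaves of $(c_\phi)_*\mathcal{O}_W$ are merely powers of a single line bundle $\mathcal{L}$ (with $\mathcal{L}^p$ trivialized by $f_j\circ\phi$, hence $\mathcal{L}$ itself trivial) --- no floor divisors appear at this stage. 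The floors, together with a nontrivial permutation $h\mapsto m_h$ of the eigenvalues, emerge only after normalizing, via a local computation at a generic point of each exceptional component (the paper follows Koll\'ar). One must also work with the partial resolution $\widehat W$, whose remaining Hirzebruch--Jung singularities are rational so that $p_g=h^1(\mathcal{O}_{\widehat W})$; insisting on a full equivariant resolution, as you propose, would spoil the clean pushforward to $Z$. Without the explicit identification of the $\mathcal{L}_h$, your ``matching'' step cannot be carried out.

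Moreover, the difficulty you single out as the ``main obstacle'' is a misdiagnosis. No antinef-type condition on the cycles $\lfloor h\cdot(f_j)/p\rfloor$ is needed: the base is the resolution $Z$ of the \emph{smooth} germ $(\mathbb{C}^2,0)$, so $h^1(\mathcal{O}_Z)=h^2(\mathcal{O}_Z)=0$, and for \emph{every} effective exceptional cycle $D'$ the structure sequence $0\to\mathcal{O}_Z\to\mathcal{O}_Z(D')\to\mathcal{O}_{D'}(D')\to 0$ together with the vanishing $h^0(\mathcal{O}_{D'}(D'))=0$ yields $h^1(\mathcal{O}_Z(D'))=-\chi(\mathcal{O}_{D'}(D'))=\chi(-D')$ directly by Riemann--Roch. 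Thus the integer-part/antinef analysis you anticipate is unnecessary, while the eigensheaf identification you pass over is indispensable.
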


 \begin{lemma}\label{lem:lens}
  \[\sum_{h=0}^{p-1} \mathfrak{s}_{R_0(r_h)}(G_0)  = 0. \]
 \end{lemma}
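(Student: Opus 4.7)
The case $q=1$ is vacuous ($G_0$ is empty), so assume $q\geq 2$. Since $G_0$ is a plumbing graph of a lens space, Proposition \ref{prop:ratgraph} gives $\mathfrak{s}_c(G_0)=0$ for every $c\in L'(G_0)/L(G_0)$, and \eqref{eq:eulatgen} then collapses the summand to
\[ \mathfrak{s}_{R_0(r_h)}(G_0) \;=\; \chi_0\bigl(R_0(r_h)\bigr) \;-\; \chi_0\bigl(r^{G_0}_{[R_0(r_h)]}\bigr), \]
where $\chi_0(x):=-\tfrac12(x,x+k_{G_0})_0$ and $r^{G_0}_c\in L'(G_0)$ is the smallest effective representative of $c$ inside $G_0$. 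The lemma thus reduces to showing
\[ \sum_{h=0}^{p-1}\Bigl[\chi_0\bigl(R_0(r_h)\bigr) - \chi_0\bigl(r^{G_0}_{[R_0(r_h)]}\bigr)\Bigr] = 0. \]

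Next I would make $R_0(r_h)$ explicit. Since $u$ is the only vertex of $\mathcal{V}(G)\setminus\mathcal{V}(G_0)$ adjacent in $G$ to $\mathcal{V}(G_0)$ (namely to $\overline{u}_1$), dualizing $L_{G_0}\hookrightarrow L_G$ sends $E_u\mapsto -E^{*,0}_{\overline{u}_1}$ and $E_v\mapsto E_v$ for $v\in\mathcal{V}(G_0)$; writing $r_h=\sum_v c_v(r_h) E_v$ with $c_v(r_h)\in[0,1)$ yields
\[ R_0(r_h) \;=\; \sum_{v\in\mathcal{V}(G_0)} c_v(r_h)\,E_v \;-\; c_u(r_h)\,E^{*,0}_{\overline{u}_1}. \]
Direct test cases ($p/q=3/2,\ 5/2,\ 7/5$) suggest the stronger \emph{pointwise} vanishing $\mathfrak{s}_{R_0(r_h)}(G_0)=0$: the lattice vector $\ell_h:=R_0(r_h)-r^{G_0}_{[R_0(r_h)]}\in L(G_0)$ satisfies the quadratic balance $\chi_0(\ell_h)=(\ell_h,\,r^{G_0}_{[R_0(r_h)]})_0$. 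If this pointwise identity holds in general the lemma is immediate.

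My preferred way to establish this pointwise vanishing is to apply the same surgery formula \eqref{eq:swsis2} to the auxiliary lens-space graph $G^{\mathrm{lens}}:=[k_0,k_1,\dots,k_s]$, obtained from $G$ by deleting the blocks $G_1,\dots,G_\nu$ and setting $e_u:=-k_0$. Proposition \ref{prop:ratgraph} forces $\mathfrak{s}_h(G^{\mathrm{lens}})=0$, so \eqref{eq:swsis2} collapses to $0=\mathcal{H}^{\mathrm{lens}}_{u,h}(1)+\mathfrak{s}_{R_0(r_h^{\mathrm{lens}})}(G_0)$ for every $h\in\mathbb{Z}_p$, and one then tries to identify the $G_0$-contribution coming from $G$ with the one coming from $G^{\mathrm{lens}}$. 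The main obstacle is precisely this comparison: $r_h$ (in $G$) and $r_h^{\mathrm{lens}}$ (in $G^{\mathrm{lens}}$) are different elements of different dual lattices, and one must argue that $(c_v(r_h))_{v\in\mathcal{V}(G_0)}$ and $c_u(r_h)$ depend on $h$ only through data attached to the chain $G_0$ at $u$ (so that the surgery polynomial at $u$ is `local' enough to permit the transfer). If this structural comparison is unavailable, the fallback is to write the $c_v(r_h)$ in closed form through the convergents of $p/q=[k_0,\dots,k_s]$ and reduce the claim to a classical Dedekind-type reciprocity identity over $h\in\mathbb{Z}_p$; this continued-fraction manipulation is the step I expect to be most technical.
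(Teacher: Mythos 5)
Your strategy---pass to the auxiliary lens--space graph $G^{\mathrm{lens}}=[k_0,\dots,k_s]$ obtained by deleting the blocks $G_j$ and apply the surgery formula there---is essentially the reduction the paper itself makes, but as written the proof is not complete: you explicitly label as ``the main obstacle'' the identification of $R_0(r_h)$ (computed in $G$) with $R_0(r_h^{\mathrm{lens}})$ (computed in $G^{\mathrm{lens}}$), and offer only a speculative fallback if it fails. That identification is exactly where the content of the lemma sits, so leaving it open is a genuine gap. It is, however, fillable from material already in the paper: the proof of Lemma \ref{lem:UACgraph} exhibits $E^*_{u'}=D/p$, where $D$ has coefficient $1$ on $u$, coefficient equal to the numerator of $[k_0,\dots,k_{i-1}]$ on $\overline{u}_i$, and equals $(f_j)$ over each block; hence the coordinates of $r_h=\{hE^*_{u'}\}$ over $\mathcal{V}(G_0)\cup\{u\}$ are $\{h\cdot(\text{numerator})/p\}$, which depend only on $p/q$. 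Since $R_0(r_h)$ involves only those coordinates (via your correct formula $R_0(E_u)=-E^{*,0}_{\overline{u}_1}$), it is literally unchanged when the $G_j$ are replaced by the empty graph. This is the first observation in the paper's proof of the lemma.

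Two further points. First, even granting the identification, your route concludes $0=\mathcal{H}^{\mathrm{lens}}_{u,h}(1)+\mathfrak{s}_{R_0(r_h)}(G_0)$ and then needs $\mathcal{H}^{\mathrm{lens},\textrm{pol}}_{u,h}(1)=0$ to extract the vanishing; you never address this. It does hold: for the chain, every degree--$2$ vertex contributes a trivial factor to \eqref{eq:defh}, so each summand is $\tfrac1p\varrho^{-1}(h)\bigl(1-\varrho([E_u^{\ast}])t^{q}\bigr)^{-1}\bigl(1-\varrho([E_{u'}^{\ast}])t\bigr)^{-1}$ by Lemma \ref{cl:eq}(a), with numerator degree strictly below denominator degree, hence zero polynomial part. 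The paper sidesteps both this computation and any pointwise claim: it notes that Lemmas \ref{lem:pol} and \ref{lem:susp} hold for arbitrary (including empty) blocks, so Lemma \ref{lem:lens} is equivalent to CAP for the lens space $S^3_{-p/q}(\text{unknot})$, which is already known from Claim \ref{cl:pgadd} and Example \ref{ex:lens}. Second, your conjectured pointwise vanishing $\mathfrak{s}_{R_0(r_h)}(G_0)=0$ is in fact correct, but a finite list of test cases is not an argument; a posteriori it follows from the sum--vanishing because $\mathfrak{s}_{l'}(G_0)=\textrm{eu\ }\mathbb{H}^{0}(G_0;k_{G_0}+2l')=-\min(w)\geq 0$ for the rational graph $G_0$, so nonnegative terms summing to zero all vanish.
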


    In the next paragraphs we recall the definition of $\mathcal{H}^{\textrm{pol}}_{u,h}$ and $\mathcal{F}^{\textrm{pol}}_{w,0}$ (following  \cite[\S 3.5]{NB})
      adapted to the present case and notations,  and
      then we provide the proofs  of  the lemmas.

     First, given a rational function $\mathcal{R}(t)$ of $t$, one defines its \emph{polynomial part} $\mathcal{R}^{\textrm{pol}}(t)$ as the unique polynomial in $t$ such that $\mathcal{R}(t) - \mathcal{R}^{\textrm{pol}}(t)$ is either $0$ or it can be written as a quotient of two polynomials of $t$ such that the numerator has degree strictly less than the denominator.

     Now $\mathcal{F}^{\textrm{pol}}_{w,0}$ and $\mathcal{H}^{\textrm{pol}}_{u,h}$ are polynomial parts of rational functions defined as follows.

      \begin{equation}\label{eq:defh}
      \mathcal{H}_{u,h}(t) = \frac{1}{p} \sum_{\varrho \in \widehat{H}} \varrho^{-1}(h)
       \prod_{v \in \mathcal{V}(G)}(1-\varrho([E_v^{\ast}])t^{-p\cdot (E_u^{\ast},E_v^{\ast})})^{\delta_v-2},
      \end{equation}
  where $\delta_v$ denotes the degree (number of adjacent edges) of a vertex $v\in \mathcal{V}(G)$.

    \begin{equation}\label{eq:deff}
     \mathcal{F}_{w,0}(t) = \frac{1}{|J|}\sum_{\varrho \in \widehat{J}}
       \prod_{v \in \mathcal{V}(\Gamma)}(1-\varrho([F_v^{\ast}])t^{-|J|\langle F_w^{\ast}, F_v^{\ast} \rangle })^{\widetilde{\delta}_v-2},
    \end{equation}
where $\widetilde{\delta}_v$ denotes the degree of a vertex $v\in \mathcal{V}(\Gamma)$.

\begin{proof}[Proof of Lemma \ref{lem:pol}.] Set
\begin{equation}\label{eq:h}
 \mathcal{H}_u(t) := \sum_{h=0}^{p-1}\mathcal{H}_{u,h}(t)
 =  \prod_{v \in \mathcal{V}(G)}(1-t^{-p\cdot (E_u^{\ast},E_v^{\ast})})^{\delta_v-2}.
\end{equation}
As taking polynomial parts of rational functions is additive, Lemma \ref{lem:pol} follows if we prove
\begin{equation}\label{eq:polid}
      \mathcal{H}_{u}(t^{|J|}) = \mathcal{F}_{w,0}(t).
\end{equation}
Let $\Delta_j = \Delta_{S^3}(K_j)$ be the Alexander polynomial of the knot $K_j$ (defined as in \cite[\S 2.6, (8)]{NemnicIII}, or \cite{EN}). Then, since
 $E_{u_j}^{\ast,j} = (f_j) \in L_{G_j}$,
\[ \frac{\Delta_j(t)}{1-t} = \prod_{v \in \mathcal{V}(G_j)}(1 - t^{-(E_{u_j}^{\ast,j}, E_v^{\ast,j})_j})^{\delta_v-2}.\]
Comparing \eqref{eq:h} with the above formula for the Alexander polynomials and using the identities (a), (b) of Lemma \ref{cl:eq} we get that

    \begin{equation}\label{eq:halex}
     \mathcal{H}_u(t) = \frac{\prod_{j=1}^{\nu}\Delta_j(t^q)}{(1-t)(1-t^q)}.
    \end{equation}

Recall that $J_j = L_{\Gamma_j}'/L_{\Gamma}$ is the first homology group of the manifold determined by $\Gamma_j$ and that $F_{w_j}^{\ast,j} = (z_j) \in L_{\Gamma_j}$.
Let $\Delta_{j,\Gamma}$ be the Alexander polynomial of the knot $K_{j,\Gamma}$ in the manifold of $\Gamma_j$ determined by $z_j=0$ (see \cite[\S 2.6, (8)]{NemnicIII}). That is,
    \[ \frac{\Delta_{j,\Gamma}(t)}{1-t} = \frac{1}{|J_j|} \sum_{\varrho_j \in \widehat{J_j}}
       \prod_{v \in \mathcal{V}(\Gamma_j)} (1 - \varrho_j([F_v^{\ast,j}]) t^{-\langle F_{w_j}^{\ast,j},F_v^{\ast,j}\rangle_j})^{\widetilde{\delta}_v-2}. \]
Recall that $J = J_1 \times \dots \times J_{\nu}$. Consequently, any character $\varrho \in \widehat{J}$ can be written as a $\nu$-tuple of characters, $\varrho = (\varrho_1, \dots, \varrho_{\nu})$ with $\varrho_j \in \widehat{J}_j = \textrm{Hom}(J_j, \mathbb{C}^{*})$.
Furthermore, for any $v \in \mathcal{V}(\Gamma_j)$, $\varrho([F_v^{\ast}]) = \varrho_j([F_v^{\ast,j}])$ and $\varrho([F_v^{\ast}]) = 1$ if $v = w$ or $v \in \Gamma_0$ as in that case $F_v^{\ast}$ represents the trivial element in $L_{\Gamma}'/L_{\Gamma}$
(see also the proof of Lemma \ref{lem:JJ}).

Comparing \eqref{eq:deff} with the above formula for the Alexander polynomials and using the identities (c), (d) of Lemma \ref{cl:eq} we get that, setting $s = t^{|J|}$,
    \begin{equation}\label{eq:falex}
     \mathcal{F}_{w,0}(t) = \frac{\prod_{j=1}^{\nu}\Delta_{j,\Gamma}(s^q)}{(1-s)(1-s^q)}.
    \end{equation}

    By \cite[Prop. 6.6]{NemnicIII} $\Delta_j = \Delta_{j,\Gamma}$, so via \eqref{eq:halex} and \eqref{eq:falex} we obtain \eqref{eq:polid}.
\end{proof}

\begin{proof}[Proof of Lemma \ref{lem:susp}]
For any element $l' = \sum_{v \in \mathcal{V}(G)}c_vE_v \in L_{G}'$, let
 $\lfloor l' \rfloor := \sum_{v \in \mathcal{V}(G)} \lfloor c_v \rfloor E_v $, resp.
$\{l'\}:=l'-\lfloor l' \rfloor$,
denote the coordinatewise integer, resp. fractional part of $l'$ in the basis $\{E_v\}_v$.
We use this notation for other graphs as well.

Using the description of $E^*_{u'}$ in the proof of Lemma \ref{lem:UACgraph} we have
$$h E^*_{u'}=\sum_{j\geq 1} h\cdot (f_j)/p + hE_u/p+D_0 \ \ \ \ (0\leq h<p),$$
where $D_0$ is supported on $G_0$. Since $r_h=\{hE^*_{u'}\}$, we obtain
$$r_h=hE^*_{u'}-\sum_{j\geq 1}\lfloor h\cdot (f_j)/p\rfloor -\lfloor D_0\rfloor.$$
Since  $R_j(E^*_{u'})=0$, $R_j(E_v)=E_v$ for $v\in{\mathcal V}(\Gamma_j)$,
and $R_j(E_v)=0$ for $v\not\in({\mathcal V}(\Gamma_j)\cup u)$, we get
  \begin{equation}\label{eq:3216}
   R_j(r_h) = - \lfloor h\cdot (f_j)/p\rfloor.
  \end{equation}
As $\Gamma_j$ is the plumbing graph of a suspension hypersurface singularity
 $g_j(x,y,z_j) = f_j(x,y) + z_j^p = 0$ and, as it is proved in \cite{NemnicIII}, \emph{for such
suspension singularities the SWIC holds} (see \S \ref{ss:sing}), we have
$\mathfrak{s}_0(\Gamma_j) = p_g(\{g_j=0\})$. Hence, the statement of the Lemma is equivalent with
\[ p_g(\{g_j=0\}) = \sum_{h=0}^{p-1} \chi_j\left(- \lfloor h\cdot (f_j)/p \rfloor\right). \]
This geometric genus formula has major importance  even independently of the present application.
We separate the statement in the following Claim.

 \begin{clm}\label{cl:susppg}
  Let $f(x,y) \in \mathbb{C}\{x,y\}$ be the equation of an irreducible plane curve singularity.
  Let $G_f$ be the dual resolution graph of a
   good embedded resolution  of $f$, from which we delete the arrowhead (strict transform) of $f$ and all the multiplicities. Let $(f)$ be the part of the divisor of $f$ supported on the
  exceptional curves.   Then for any positive integer $p$
 the geometric genus of the suspension singularity  $\{ g(x,y,z) = f(x,y) + z^p = 0 \}$ is
  \[ p_g(\{g= 0 \}) = \sum_{h=0}^{p-1} \chi (-\lfloor h\cdot(f)/p\rfloor). \]
 \end{clm}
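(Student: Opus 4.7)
The plan is to realize $\{g=0\}$ as the degree-$p$ cyclic cover of $(\mathbb{C}^2,0)$ branched along $\{f=0\}$, via the projection $(x,y,z)\mapsto(x,y)$, and to compute its geometric genus from the character decomposition of the pushforward of the structure sheaf to an embedded resolution of $\{f=0\}$.

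\emph{Setup.} Let $\pi_f\colon V\to(\mathbb{C}^2,0)$ denote the embedded resolution with dual graph $G_f$, exceptional divisors $\{E_v\}_{v\in\mathcal{V}(G_f)}$, and total transform $\pi_f^{*}(f)=(f)+S$ where $(f)=\sum_v m_v E_v$ and $S$ is the strict transform. Pull the cyclic cover back to $V$ and normalize, obtaining a finite degree-$p$ morphism $\pi\colon\widetilde{Y}\to V$. Around a generic point of an $E_v$ the cover reads $z^p=u^{m_v}$, and at a crossing $E_v\cap E_{v'}$ it reads $z^p=u^{m_v}(u')^{m_{v'}}$; after normalization these are smooth or Hirzebruch--Jung cyclic-quotient singularities, which are rational. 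Consequently $\widetilde{Y}$ has at worst rational singularities, so $p_g(\{g=0\})=\dim H^1(\widetilde{Y},\mathcal{O}_{\widetilde{Y}})$ without any further blow-up.

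\emph{Pushforward and cohomology.} By the standard pushforward formula for a normalized cyclic cover (e.g.\ Esnault--Viehweg),
\[
\pi_{*}\mathcal{O}_{\widetilde{Y}}=\bigoplus_{h=0}^{p-1}\mathcal{O}_V\bigl(\lfloor h(f)/p\rfloor\bigr),
\]
the strict transform $S$ contributing nothing since $\lfloor h/p\rfloor=0$ for $0\le h<p$. Writing $Z_h:=\lfloor h(f)/p\rfloor$ and applying Leray for the finite morphism $\pi$,
\[
p_g(\{g=0\})=\sum_{h=0}^{p-1}\dim H^1\bigl(V,\mathcal{O}_V(Z_h)\bigr).
\]
For each effective exceptional $Z=Z_h$, tensor $0\to\mathcal{O}_V(-Z)\to\mathcal{O}_V\to\mathcal{O}_Z\to 0$ with $\mathcal{O}_V(Z)$ to obtain $0\to\mathcal{O}_V\to\mathcal{O}_V(Z)\to\mathcal{O}_Z(Z)\to 0$. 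Since $V$ resolves a smooth germ, $H^1(V,\mathcal{O}_V)=0$; a Hartogs-type argument gives $H^0(V,\mathcal{O}_V(Z))=H^0(V,\mathcal{O}_V)$ (a global section of $\mathcal{O}_V(Z)$ is a meromorphic function on $V$ whose polar divisor is exceptional, so descends to a rational function on $\mathbb{C}^2$ with pole set contained in $\{0\}$, hence extends holomorphically by Hartogs). The long exact sequence then forces $H^0(\mathcal{O}_Z(Z))=0$ and $H^1(V,\mathcal{O}_V(Z))\cong H^1(\mathcal{O}_Z(Z))$. Riemann--Roch on the one-dimensional scheme $Z$ yields
\[
\chi(\mathcal{O}_Z(Z))=\chi(\mathcal{O}_Z)+(Z,Z)=\tfrac{1}{2}\bigl((Z,Z)-(Z,k_{G_f})\bigr)=-\chi(-Z),
\]
so $\dim H^1(V,\mathcal{O}_V(Z))=\chi(-Z)$. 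Summing over $h=0,\dots,p-1$ yields the Claim.

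\emph{Main obstacle.} The template (cyclic-cover pushforward plus cohomology on a resolution) is classical, but two bookkeeping points require attention and form the bulk of the work: first, matching the pushforward decomposition with the correct floor-function indexing so that the divisors $\lfloor h(f)/p\rfloor$ of the Claim arise naturally; and second, and more subtly, securing on the \emph{non-compact} surface $V$ the identification $H^0(V,\mathcal{O}_V(Z))=H^0(V,\mathcal{O}_V)$ via Hartogs, which is what converts a Riemann--Roch identity into an honest dimension formula for $H^1$. Once those two are in place, the matching of the paper's $\chi$ with the Euler characteristic of $\mathcal{O}_Z(Z)$ is routine.
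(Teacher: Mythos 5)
Your proposal is correct and follows essentially the same route as the paper: realize $\{g=0\}$ as the normalized $\mathbb{Z}_p$--cover of the embedded resolution of $f$, use the eigensheaf decomposition $\bigoplus_{h}\mathcal{O}(\lfloor h(f)/p\rfloor)$ of the pushforward of the structure sheaf (which the paper, citing Koll\'ar, justifies by an explicit local computation where you instead invoke the standard Esnault--Viehweg formula), and then compute each $h^1(\mathcal{O}(Z_h))$ from the sequence $0\to\mathcal{O}_V\to\mathcal{O}_V(Z_h)\to\mathcal{O}_{Z_h}(Z_h)\to 0$ together with Riemann--Roch and adjunction. The only cosmetic difference is that you derive $H^0(\mathcal{O}_{Z_h}(Z_h))=0$ from the Hartogs identification $H^0(\mathcal{O}_V(Z_h))=H^0(\mathcal{O}_V)$, whereas the paper quotes Grauert--Riemenschneider vanishing for the same vanishing.
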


\begin{rem}
A combinatorial formula (involving Dedekind sums) for the \emph{signature} of (the Milnor fibre of) suspension singularities was presented in \cite{Nemsig}. Recall that Durfee and Laufer type formulae imply that the geometric genus and the signature determine each other modulo the link
(see \textit{e.g.} \cite[Theorem 6.5]{Neminv} and the references therein).
Nevertheless, the above formula is of different type.
\end{rem}

 \begin{proof}
 Let $\phi:Z\to ({\mathbb C}^2,0)$ be the embedded resolution of $f$. Consider the
 ${\mathbb Z}_p$ branched covering  $c:(\{g=0\},0)\to  ({\mathbb C}^2,0)$, the restriction of
 $(x,y,z)\mapsto (x,y)$. Let $c_\phi:W\to Z$ be the pullback of $c$ via $\phi$ and let
 $\widehat{c}_\phi: \widehat{W}\to Z$ be the composition of the normalization
 $n:\widehat{W}\to W$ with $c_\phi$. Then $\widehat{W}\to W\to \{g=0\}$
 is  a partial resolution of $\{g=0\}$: although it might have some Hirzebruch--Jung singularities,
 since these are rational, one has $p_g(\{g=0\})=h^1({\mathcal O}_{\widehat {W}})$.
 On the other hand, we claim that
 \begin{equation}\label{eq:kollar}
 (\widehat{c}_\phi)_*({\mathcal O}_{\widehat{W}})=\oplus_{h=0}^{p-1}\,
  {\mathcal O}_Z(\lfloor h\cdot (f) /p\rfloor ).
 \end{equation}
 This follows basically from \cite[\S 9.8]{kollar}. For the convenience of the reader we sketch
 the proof.

 We describe the sheaves $(c_\phi)_*(\calO_W)$ and $(\widehat{c}_\phi)_*(\calO_{\widehat{W}})$
 in the neighbourhood $U$ of a generic point of the exceptional set $E$ of $\phi$. Consider such a
 point with local coordinates $(u,v)$, $\{u=0\}= E\cap U$, $(f)$ in $U$ is given by
 $u^m=0$.
 Consider the covering, a local neighbourhood of type $\{(u,v,z)\,:\, z^p=u^m\}$ in $W$.
 Then $\calO_{W,0}$ as $\C\{u,v\}$--module is $\oplus _{h=0}^{p-1}z^h\cdot \C\{u,v\}$.
 For simplicity we assume ${\rm gcd}(m,p)=1$.
 The ${\mathbb Z}_p$--action is induced by the monodromy on the regular part, namely by the permutation of the $z$--pages,  induced over the loop $u(s)=\{e^{2\pi i s}\}_{0\leq s\leq 1}$.
 This is the multiplication by $\xi:=e^{2\pi i m/p}$. Hence, $z^h\C\{u,v\}$ is the
 $\xi^h$--eigensheaf of  $(c_\phi)_*(\calO_W)$.

 If we globalize $z\C\{u,v\}$, we get a line bundle on $Z$, say $\calL$.
 Then the local representative of $\calL^p$ is $z^p\C\{u,v\}=u^m\C\{u,v\}=\C\{u,v\}(-(f))$.
 Hence $\calL^{p}$ is trivialized by $ f\circ \phi$. Since ${\rm Pic}(Z)=0$,
 $\calL$ itself is a trivial line bundle on $Z$.

 Next, we consider the normalization $\widehat{W}$. Above $U$ it is $({\mathbb C}^2,0)$
 with local coordinates $(t,v)$, and the normalization is $z=t^m$, $u=t^p$. In particular,
 $(\widehat{c}_\phi)_*(\calO_{\widehat{W},0})=\oplus _{h=0}^{p-1}t^h\cdot \C\{u,v\}$, where
 $\calf^{(h)}:=t^h\cdot \C\{u,v\}$ is the $e^{2\pi ih/p}$--eigensheaf.
 Set the integer $m'$ with $0\leq m'<p$ and $mm'=1+kp$ for certain $k\in {\mathbb Z}$.
  Then one has the following eigensheaf inclusions: $t^h\C\{u,v\}\supset z^{\{\frac{hm'}{p}\}p}
  \cdot \C\{u,v\}=\calL^{\{\frac{hm'}{p}\}p}|_U$. Hence, for some effective cycle $D$ we must have
  $t^h\C\{u,v\}= \calL^{\{\frac{hm'}{p}\}p}(D)|_U$. This, by taking $m$-power reads as
  $z^h\C\{u,v\}= z^{\{\frac{hm'}{p}\}pm} \C\{u,v\}(mD)$. This means that if
  $\{hm' /p\}=m_h/p$ and $mm_h=k_hp+h$ for certain integers $m_h$ and $k_h$, $0\leq m_h<p$,
  then the local equation of $mD$ is $z^{\{\frac{hm'}{p}\}pm-h}=
  z^{k_hp}$. Hence $D$ locally is given by $t^{k_hp}=u^{k_h}$.
  Since  $k_h=\lfloor mm_h/p\rfloor$, 
  the global reading of this fact  is
  $D=\lfloor m_h\cdot (f)/p\rfloor$. Hence
 $$(\widehat{c}_\phi)_*(\calO_{\widehat{W}})=\oplus _{h=0}^{p-1}\calL^{\{\frac{hm'}{p}\}p}
 (\lfloor m_h\cdot (f)/p\rfloor).$$
 Since $\calL$ is a trivial bundle, and $h\mapsto m_h$ is a permutation of $\{0,\ldots, p-1\}$,
 (\ref{eq:kollar}) follows.

 Next, from (\ref{eq:kollar}) we obtain  $p_g(\{g=0\})=\sum_h h^1({\mathcal O}_Z(
 \lfloor h\cdot (f) /p\rfloor )$. Set $D':=\lfloor h\cdot (f) /p\rfloor $.
 Then from the cohomological  exact sequence of the exact sequence of sheaves
 $0\to {\mathcal O}_Z\to {\mathcal O}_Z(D')\to {\mathcal O}_{D'}(D')\to 0$, and from
 $h^1({\mathcal O}_Z)=p_g(({\mathbb C}^2,0))=0$, we get
 $h^1({\mathcal O}_Z(D'))=h^1({\mathcal O}_{D'}(D'))$. Since by Grauert--Riemenschneider vanishing
 $ h^0({\mathcal O}_{D'}(D'))=0$, we have
 $$h^1({\mathcal O}_Z(D'))=-\chi({\mathcal O}_{D'}(D'))=-(D',D')+(D',D'+K)/2=\chi(-D').$$
 This ends the proof of the Claim.
 \end{proof}
 Moreover, the proof of Lemma \ref{lem:susp} is also completed.
 \end{proof}

\begin{proof}[Proof of Lemma \ref{lem:lens}] We observe two facts. First,
 from  the proof of Lemma \ref{lem:UACgraph} we obtain that
 $R_0(r_h)$ only depends on the value $p/q$ (and not on the blocks $G_j$, $j \geq 1$).
 It has the same expression even if we replace all the graph $G_j$ by the empty graph.
Second,
from the equations (\ref{eq:swuac2}) and (\ref{eq:swsis2}) and the discussion after it we get that
 under the validity of Lemmas  \ref{lem:pol} and  \ref{lem:susp}
 (what we already proved for any situation)
 the main  Theorem \ref{thm:main} (property CAP) is equivalent with Lemma
  \ref{lem:lens}.  Put these two together, the validity of \ref{lem:lens} is equivalent with
 the validity of CAP in the case when $G_j=\emptyset$ for all $j\geq 1$. But CAP for $G_0\cup\{u\}$
 is true by Claim \ref{cl:pgadd} and Example \ref{ex:lens}.
\end{proof}

\section{The  invariant $\mathfrak{s}_h$ and lattice cohomology}\labelpar{s:lattice}

\subsection{Lattice cohomology}\labelpar{ss:latticedef}
 The normalized SW invariant
 $\mathfrak{s}_h(G)$ can also be expressed as the Euler characteristic of
 the \emph{lattice cohomology}. The advantage of this approach is that it provides an alternative, completely elementary way to define $\mathfrak{s}$, as the definition of the lattice cohomology is purely combinatorial from the plumbing graph $G$.

 We briefly recall the definition and some facts about the lattice cohomology associated with a ${\mathbb Q}HS^3$ 3--manifold  with negative definite plumbing graph $G$.
 For more  see \cite{Nlatnorm,NR}.

Usually one starts with a lattice $\Z^s$ with fixed base elements $\{E_i\}_i$. This automatically
provides a cubical decomposition of $\R^s=\Z^s\otimes \R$: the 0--cubes are the lattice points
$l\in \Z^s$, the 1--cubes are the `segments' with endpoints $l$ and $l+E_i$, and more generally,
a $q$--cube $\square=(l,I)$ is determined by a lattice point $l\in \Z^s$ and a subset $I\subset \{1,\ldots,s\}$
 with $\# I=q$, and it has vertices at the lattice points $l+\sum_{j\in J}E_j$ for different $J\subset I$.

 One also takes a weight function $w:\Z^s\to \Z$ bounded below,
  and for each cube $\square=(l,I)$ one defines
 $w(\square):=\max\{w(v), \ \mbox{$v$ vertex of $\square$}\}$.  Then, for each integer $n\geq \min(w)$
 one considers the simplicial complex $S_n$ of $\R^s$, the union of all the cubes $\square$ (of any dimension)
 with $w(\square)\leq n$. Then the {\it lattice cohomology associated with $w$} is
 $\{\mathbb{H}^q(\Z^s,w)\}_{q\geq 0}$, defined by $\mathbb{H}^q(\Z^s,w):=\oplus _{n\geq \min(w)}
 H^q(S_n,\Z)$. Each $\mathbb{H}^q$ is graded (by $n$) and it is a $\Z[U]$--module, where
 the $U$--action consists of  the restriction maps induced by the inclusions $S_n\hookrightarrow S_{n+1}$.
 Similarly, one defines the {\it reduced cohomology associated with $w$} by
  $\mathbb{H}_{{\rm red}}^q(\Z^s,w):=\oplus _{n\geq \min(w)}
 \widetilde{H}^q(S_n,\Z)$. In all our cases $\mathbb{H}_{{\rm red}}^q(\Z^s,w)$ has finite $\Z$--rank.
The \emph{normalized Euler characteristic} of  $\mathbb{H}^*(\Z^s,w)$ is ${\rm eu}\,\mathbb{H}^*:=
-\min(w)+\sum_{q\geq 0}\, (-1)^q\,{\rm rank}_\Z\, \mathbb{H}^q_{{\rm red}}$. Formally, we also set
${\rm eu}\,\mathbb{H}^0:=-\min(w)+{\rm rank}_\Z\, \mathbb{H}^0_{{\rm red}}$.

Given a negative definite plumbing graph $G$ of a $\mathbb{Q}HS^3$ 3-manifold $M$ and a representative $l' \in L'$ of an element  $[l'] = h \in H $,
one works with the lattice $L = L_G = \mathbb{Z}\langle E_v \rangle_{v \in \mathcal{V}(G)}$
and weight function $L \ni l \mapsto -\frac{1}{2}(l, l + k_G + 2l')$.
 The cohomology theory  corresponding to this weight function is
  denoted by $\mathbb{H}^{\ast}(G; k_G+2l')$.
  If for  $h \in H$ we  choose the minimal representative $r_h\in
L'$, then the cohomology theory $\mathbb{H}^{\ast}(G; k_G+2r_h)$ is in fact an invariant of the pair $(M,h)$ (\textit{i.e.} it does not depend on the plumbing representation) and thus can be denoted by $\mathbb{H}_h^{\ast}(M)$. It is proven in \cite{NemSW} that for any $l'\in L'$ and $h=[l']\in H$
 \begin{equation}\label{eq:LCoh}
  \mathfrak{s}_{l'}(G) = \textrm{eu\ } \mathbb{H}^{\ast}(G;k_G+2l') \ \ \mbox{and} \ \
  \mathfrak{s}_h(M) = \textrm{eu\ } \mathbb{H}_h^{\ast}(M). \end{equation}

\subsection{Lattice cohomology  of integral surgeries.}\labelpar{ss:intsurg}

The lattice cohomology of integral surgeries ($q=1$)
was treated in \cite{NSurg,Npq,NR,BodNem}.
We use the notation $p/q = d \in \mathbb{Z}$. Clearly  $u = u'$.

 Let $\Delta_j(t)$ be the Alexander polynomial of the algebraic knot $K_j$ normalised by $\Delta_j(1)=1$.
Let $\delta_j$ be the Seifert genus of $K_j\subset S^3$ (or, the delta--invariant of the
corresponding plane curve singularity),  and write  $\delta:=\sum_{j= 1}^\nu \delta_j$.
 Set also $\Delta(t) = \prod_{j=1}^\nu\Delta_j(t)$
 and write it in a form $\Delta(t) = 1 + \delta(t-1) + (t-1)^2Q(t)$ with $Q(t) = \sum_{i=0}^{2\delta-2}q_it^i$.
 Note that $Q(1)=\Delta''(1)/2$.

\begin{example}\label{ex:SIS} The case of integral surgeries is especially important 
in singularity theory, since the links of \emph{superisolated singularities} (see \cite{Lue, LMN}) are of this type. They appear
as follows.
Let $f \in \mathbb{C}[x,y,z]$ be an irreducible homogeneous polynomial of degree $d$ such that
its zero set in $\mathbb{CP}^2$
 is a rational cuspidal curve; \textit{i.e.} $C = \{ f=0 \}$ is homeomorphic to $S^2$ and all the singularities of $C$ are locally irreducible.
 Let their number be $\nu$.
 Assume that there is no singular point on the projective line given by $z=0$. Then the equation $f(x,y,z) + z^{d+1} = 0$ in $(\mathbb{C}^3,0)$
determines an isolated complex surface  singularity with link homeomorphic to $S^3_{-d}(K)$,
where $K$ is the connected sum of algebraic knots given by the local topological types of the singularities on $C$. In this case, by genus formula, $(d-1)(d-2)=2\delta$,
a relation which connects $K$ with $d$.

\vspace{2mm}

However, we can take the surgery (and plumbed) manifold $M = S^3_{-d}(K)$ for any $K$ and with arbitrary $d>0$, even
without the `analytic compatibility' $(d-1)(d-2)=2\delta$.
\end{example}

\vspace{2mm}

Next we recall some results on lattice cohomology,
which will be combined with  the above proved CAP.
(They  will be very useful in fast computations of examples in the next section.)
In the next general discussion
the identity $(d-1)(d-2)=2\delta$ will not be assumed.

Write $s_h := hE_u^{\ast}$, then $r_h = \{ hE_u^{\ast} \}=\{s_h\}$, and set also
$c_h := \chi(r_h) - \chi(s_h)$.

From \cite[Theorem 7.1.1]{NR} (cf. also \cite[Theorem 3.1.3]{BodNem}) we know that
\[ \mathfrak{s}_{s_h}(G) = \textrm{eu\ } \mathbb{H}^{\ast}\left(S^3_{-d}(K);k_G+2s_h\right) = \sum_{\substack{n \equiv h (\textrm{mod\ } d)\\ 0\leq n \leq 2\delta-2}} q_n. \]
By the surgery (`cut-and-paste') formula \cite[Theorem 1.0.1]{NB} one has
  \[ \mathfrak{s}_{s_h}(G) = \mathcal{H}^{\textrm{pol}}_{u,h}(1) + \sum_{j=1}^{\nu}
 \mathfrak{s}_{R_j(s_h)}(G_j).\] 
 Since $R_j(s_h) = 0$ and $\mathfrak{s}_{0}(G_j) = 0$ (cf.  Prop.
  \ref{prop:ratgraph}) we get $\mathfrak{s}_{s_h}(G) = \mathcal{H}^{\textrm{pol}}_{u,h}(1)$, hence
  \[ \mathcal{H}^{\textrm{pol}}_u(1) = \sum_{h=0}^{p-1}
  \mathfrak{s}_{s_h}(G)=\sum_{n=0}^{2\delta-2} q_n = Q(1). \]
  This is related with the invariants $\mathfrak{s}_h(G)$ as follows.
From  (\ref{eq:LCoh}) and (\ref{eq:difsh})
  \[ \sum_{h=0}^{d-1} \mathfrak{s}_h(G) = \sum_{h=0}^{d-1}
   \mathfrak{s}_{s_h}(G)+ \sum_{h=0}^{d-1} (\chi(r_h) - \chi(s_h)) =
   \sum_{n=0}^{2\delta-2} q_n + \sum_{h=0}^{d-1} c_h=Q(1)+
  \sum_{h=0}^{d-1} c_h.\]
The identity  $p_g(\{g_j=0\})=\sum_h\chi_j(-\lfloor h\cdot (f_j)/d \rfloor)$ from
 Claim \ref{cl:susppg} has the  addendum
  \[  \sum_{j=1}^{\nu}  \chi_j
  (- \lfloor h\cdot (f_j)/d \rfloor) = \chi(r_h)-\chi(s_h)= c_h. \]
Indeed,   $s_h=hE^*_u = hE_u/d + \sum_j h(f_j)/d = hE_u/d + \sum_j h E_{u_j}^{\ast,j}$,  $r_h=hE_u/d+\sum_j\{h(f_j)/d\}$,
  $s_h-r_h= \sum_j\lfloor h(f_j)/d\rfloor$,  hence  $(s_h,s_h-r_h)=0$. Therefore
$\sum_j\chi_j
  (- \lfloor h\cdot (f_j)/d \rfloor)=\chi(r_h-s_h)=\chi(r_h)-\chi(s_h)$.

  In this way, $\sum_{h=0}^{d-1} \mathfrak{s}_h(G)$ can be computed easily.

\section{Examples  and applications}\labelpar{s:exandapp}

\begin{example}\label{ex:mainex}
 Consider the plumbing graph of a superisolated singularity corresponding to a curve
 of degree $d = 8$ with three singular points whose knots
$K_1, K_2, K_3$ are the torus knots of type $(6,7), (2,9), (2,5)$, respectively.

 \begin{center}
 \includegraphics[width=12cm]{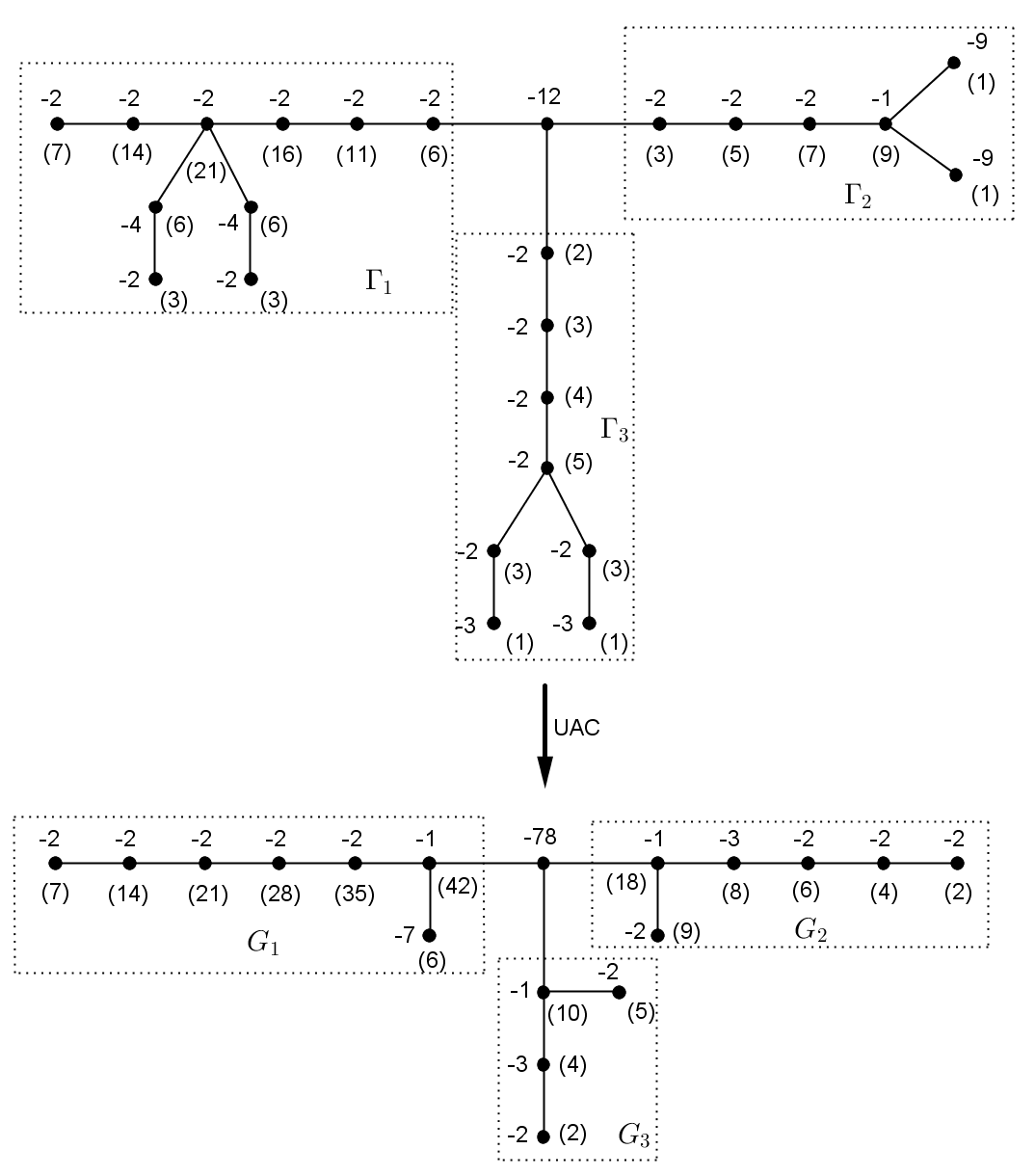}
 \end{center}

 One computes that
 \[ \mathcal{H}_{u}(t) =
    (1-t)\cdot \frac{1-t^{42}}{(1-t^6)(1-t^7)}\cdot \frac{1-t^{10}}{(1-t^2)(1-t^5)}\cdot
    \frac{1-t^{18}}{(1-t^2)(1-t^9)} =
    \frac{\Delta_1(t)\Delta_2(t)\Delta_3(t)}{(1-t)^2} \]
 and $\mathcal{H}^{\textrm{pol}}_u(1) = 293$. Correspondingly,
 $\sum_{h=0}^{7} \mathfrak{s}_{s_h}(G) = Q(1) = 293$. One also computes that $\sum_{h=0}^{7}c_h = 34$. Therefore, $\sum_{h=0}^{7}\mathfrak{s}_{h}(G) = 293 + 34 = 327$.

 After computing the graph $\Gamma$ of the UAC, we have $J = \mathbb{Z}_7 \times \mathbb{Z}_9 \times \mathbb{Z}_5$ and (setting $s = t^{7\cdot 9\cdot 5}$ and after summation $\Sigma_*$ over
 $\zeta_1 \in \mathbb{Z}_7$, $\zeta_2 \in \mathbb{Z}_9$,  $\zeta_3 \in \mathbb{Z}_5$, where $\mathbb{Z}_l = \{e^{\frac{2\pi i m}{l}} \}_m$ are cyclic groups),
the rational function  $\mathcal{F}_{w,0}(t)$ equals
 \[
 \frac{1-s}{7\cdot 9\cdot 5}\cdot  \sum_*
 \frac{(1-s^{21})^2}{(1-s^7)(1-\zeta_1^2s^3)(1-\zeta_1^{-2}s^3)}\frac{(1-s^9)}{(1-\zeta_2^5s)(1-\zeta_2^{-5}s)}\frac{(1-s^5)}{(1-\zeta_3s)(1-\zeta_3^{-1}s)} = \]
 \[ = \frac{\Delta_{1,\Gamma}(s)\Delta_{2,\Gamma}(s)\Delta_{3,\Gamma}(s)}{(1-s)^2}. \]
 Then  $\mathcal{F}_{w,0}(t) = \mathcal{H}_{u}(s)$ holds indeed with $s = t^{7\cdot 9\cdot 5}$. Correspondingly,
 \[\mathfrak{s}_0(\Gamma)
  = \mathcal{F}^{\textrm{pol}}_{w,0}(1) + \sum_{j=1}^{3}p_g(\{g_j=0\}) = 293 + 34 = 327. \]
\end{example}

\begin{example}\label{rem:counter}
 We wish to emphasize that the covering
 additivity property of $\mathfrak{s}$  is not true in general, not even when restricting ourselves to integral surgeries along algebraic knots in integral homology spheres (instead of $S^3$). This is shown by the next example (motivated by \cite[Remark 6.8.(2)]{NemnicIII}; the arrowhead of that graph is replaced by the $-8$ vertex below).

 \begin{picture}(300,100)(20,-10)

                         \put(84,72){\circle*{5}}

\put(60,48){\circle*{5}}  \put(84,48){\circle*{5}}

\put(60,24){\circle*{5}} \put(84,24){\circle*{5}}

                         \put(84,0){\circle*{5}}

\put(108,36){\circle*{5}} \put(132,36){\circle*{5}}

                            \put(84,48){\line(0,1){24}}

\put(60,48){\line(1,0){24}} \put(84,48){\line(2,-1){24}}

\put(60,24){\line(1,0){24}} \put(84,24){\line(0,-1){24}}

                            \put(84,24){\line(2,1){24}}

\put(108,36){\line(1,0){24}}

                                  \put(74,72){\makebox(0,0){$-5$}}

\put(50,48){\makebox(0,0){$-2$}}  \put(94,53){\makebox(0,0){$-1$}}

\put(50,24){\makebox(0,0){$-2$}} \put(84,32){\makebox(0,0){$-1$}}

                                 \put(74,0){\makebox(0,0){$-5$}}

\put(108,44){\makebox(0,0){$-7$}} \put(132,44){\makebox(0,0){$-4$}}

\put(150,36){\vector(1,0){24}}

\put(160,45){\makebox(0,0){UAC}}

\put(200,36){\circle*{5}} \put(224,36){\circle*{5}} \put(248,36){\circle*{5}} \put(272,36){\circle*{5}}

      \put(224,12){\circle*{5}}     \put(248,12){\circle*{5}}

\put(200,36){\line(1,0){72}}

\put(224,12){\line(0,1){24}} \put(248,12){\line(0,1){24}}

\put(200,44){\makebox(0,0){$-2$}} \put(224,44){\makebox(0,0){$-1$}} \put(248,44){\makebox(0,0){$-4$}} \put(272,44){\makebox(0,0){$-8$}}

      \put(214,12){\makebox(0,0){$-5$}}     \put(238,12){\makebox(0,0){$-2$}}

\end{picture}

If we replace
the $(-8)$--vertex of $G$ by an arrowhead (representing a knot $K$)
we get an integral homology sphere $\mathfrak{S}^3$,
the corresponding knot has $m_{u_1}=6$, hence $M(G)=\mathfrak{S}^3_{-2}(K)$, and
$G$ has determinant $2$. One computes that $\mathfrak{s}_0(G)+\mathfrak{s}_1(G)= 15 + 14 = 29$,
while $\mathfrak{s}_0(\Gamma) = 21$. In fact,  when trying to copy the proof of
Theorem \ref{thm:main}, one finds that neither the polynomial identity \ref{lem:pol}
holds (this is why the example was present in \cite[Remark 6.8.(2)]{NemnicIII}),
nor is $\Gamma \backslash w$ of suspension type (satisfying the SWIC).
\end{example}

\begin{remark}
 On the other hand, there are facts suggesting that the CAP of $\mathfrak{s}$ can hold in more general settings. Indeed, as we indicated in Claim \ref{cl:pgadd}, if for a given $M$ one can find a
 surface singularity  with link $M$ such that the EqSWIC holds for the singularity $(X,0)$ and the SWIC holds for its UAC, then the additivity of $\mathfrak{s}$ holds automatically.
 (Eq)SWIC was verified for many analytic structures,
 whose links are not of surgery type.
 On the other hand, the family of superisolated singularities is the main
 source of counterexamples for SWIC (and this was one of the motivations to test CAP for them).

Independently of any analytic argument,
one can also find purely topological examples
 for which  CAP still works
 (and in which cases not only that we cannot verify the presence of EqSWIC/SWIC, but we cannot even identify any specific analytic  structure on the topological type, or on certain special subgraphs).
Here is one  (for which the assumptions of Theorem \ref{thm:main} do not hold either).

 \begin{picture}(300,100)(20,-10)

                                                   \put(84,72){\circle*{5}}

\put(36,48){\circle*{5}} \put(60,48){\circle*{5}}  \put(84,48){\circle*{5}}

\put(36,24){\circle*{5}} \put(60,24){\circle*{5}} \put(84,24){\circle*{5}}

                                                   \put(84,0){\circle*{5}}

\put(108,36){\circle*{5}} \put(132,36){\circle*{5}}

                                                     \put(84,48){\line(0,1){24}}

\put(36,48){\line(1,0){24}} \put(60,48){\line(1,0){24}} \put(84,48){\line(2,-1){24}}

\put(36,24){\line(1,0){24}} \put(60,24){\line(1,0){24}} \put(84,24){\line(0,-1){24}}

                                                     \put(84,24){\line(2,1){24}}

\put(108,36){\line(1,0){24}}

                                                                   \put(74,72){\makebox(0,0){$-2$}}

\put(36,56){\makebox(0,0){$-4$}} \put(60,56){\makebox(0,0){$-3$}}  \put(94,53){\makebox(0,0){$-3$}}

\put(36,32){\makebox(0,0){$-4$}} \put(60,32){\makebox(0,0){$-3$}} \put(84,32){\makebox(0,0){$-3$}}

                                                                  \put(74,0){\makebox(0,0){$-2$}}

\put(108,44){\makebox(0,0){$-1$}} \put(132,44){\makebox(0,0){$-16$}}

\put(150,36){\vector(1,0){24}}

\put(160,45){\makebox(0,0){UAC}}

\put(200,36){\circle*{5}} \put(224,36){\circle*{5}} \put(248,36){\circle*{5}} \put(272,36){\circle*{5}} \put(296,36){\circle*{5}}

                          \put(248,12){\circle*{5}}     \put(272,12){\circle*{5}}

\put(200,36){\line(1,0){96}}

\put(248,12){\line(0,1){24}} \put(272,12){\line(0,1){24}}

\put(200,44){\makebox(0,0){$-4$}} \put(224,44){\makebox(0,0){$-3$}} \put(248,44){\makebox(0,0){$-3$}} \put(272,44){\makebox(0,0){$-1$}} \put(296,44){\makebox(0,0){$-32$}}

                                                                \put(238,12){\makebox(0,0){$-2$}}     \put(262,12){\makebox(0,0){$-2$}}

\end{picture}

 One verifies that $\det(G)=2$, and
  $\mathfrak{s}_0(G)+ \mathfrak{s}_1(G)= 147 + 132 = 279 = \mathfrak{s}_0(\Gamma)$.

This raises the interesting question to find the precise limits of the CAP.
\end{remark}

\begin{remark}\label{rem:END1}
The lattice cohomology plays an intermediate role connecting the analytic invariants
of a normal surface singularity $X$ with the topology of its link $M = M(G)$. \textit{E.g.}, one proves using 
\cite[Prop. 6.2.2, Ex. 6.2.3, Thm. 7.1.3, 7.2.4]{Nlatnorm} 
that for any $h$ one has
$$p_g(X)_h\leq
\textrm{eu\ } \mathbb{H}^0(M(G);k_G+2r_h).$$
Furthermore, for surgery manifolds $M(G) = S^3_{-d}(K)$ one has the vanishing
$\mathbb{H}^q(M(G),k_G+2r_h)=0$ for $q\geq \nu$ (\cite{NR}). In particular, for superisolated singularities corresponding to unicuspidal rational plane curves ($\nu=1$) one has
\begin{equation}\label{eq:REM1}
p_g(X)_h\leq
\textrm{eu\ } \mathbb{H}^*(M(G);k_G+2r_h)=\mathfrak{s}_h(M).\end{equation}
Therefore, for $M=S^3_{-d}(K)$ with $\nu=1$, if the SWIC holds for the UAC $(Y,0)$, that is, if
$p_g(Y)=\mathfrak{s}_0(\Sigma)$, then this identity, the CAP and (\ref{eq:REM1}) implies
$p_g(X)_h=\mathfrak{s}_h(M)$ for any $h$, that is, the EqSWIC for $(X,0)$.

This is important for the following reason: for superisolated singularities
we do not know (even at conjectural level) any candidate (either topological or analytic!)
for their
equivariant geometric genera. It is not hard to verify that $p_g(X)=d(d-1)(d-2)/6$, but no formulas
exist for $p_g(X)_h$, and no (topological or analytic) prediction exists for $p_g(Y)$ either.
\end{remark}

\begin{example}\label{ex:eq}
 Set $\nu = 1$, $d = 4$, and let $K_1$ be the $(3,4)$ torus knot. This can be realized by the
 superisolated singularity $zx^3+y^4+z^5=0$. In this case
 $M = S^3_{-4}(K_1)$.

One verifies that $\sum_{h=0}^{3} \mathfrak{s}_h(M) = 9 = \mathfrak{s}_0(\Sigma)$
correspondingly to Theorem \ref{thm:main}.

On the other hand, the UAC $(Y,0)$ of the \emph{singularity} is the Brieskorn singularity $x^3+y^4+z^{16}=0$, whose geometric genus is $p_g(Y) = 9$ too.
Hence, by the above remark, $p_g(X)_h=\mathfrak{s}_h(M)$ for any $h$.
\end{example}

\begin{remark}\label{rem:END2} (Continuation of \ref{rem:END1}.)
It is interesting that we have two sets of invariants, an analytic package
$(\,\{p_g(X)_h\}_h, p_g(Y)\,)$ and a topological one $(\, \{\mathfrak{s}_h(M)\}_h,\mathfrak{s}_0(\Sigma)\,)$,
and both of them satisfy the additivity property.
Nevertheless, in some cases, they do not agree. For example, if $p_g(Y)<\mathfrak{s}_0(\Sigma)$, then by (\ref{eq:REM1}) necessarily at least one of the inequalities in (\ref{eq:REM1}) is strict.
In particular, for both topological and analytical package the additivity property is stable, it is
never damaged, but the equality of the two packages in certain cases fails.
\end{remark}
\begin{example}\label{ex:ineq}
 Set again $\nu = 1$ and $d = 4$, but this time let $K_1$ be the $(2,7)$ torus knot. As usual
  $M = S^3_{-4}(K_1)$. By a computation $\sum_{h=0}^{3} \mathfrak{s}_h(M) = 10 = \mathfrak{s}_0(\Sigma)$.

 A suitable superisolated singularity is given by  $(zy-x^2)^2-xy^3+z^5=0$.
 By  \cite{LMN} (the end of section 4.5.)
the  universal abelian cover $Y$ satisfies the strict inequality   $p_g(Y) < 10$.
Therefore, $p_g(X)_h < \mathfrak{s}_h(M)$ for at least one $h$ (in fact, not for $h=0$).
\end{example}

\end{document}